\theoremstyle{plain}
\newtheorem{thm}{Theorem}[section]
\newtheorem{cor}[thm]{Corollary}
\newtheorem{lem}[thm]{Lemma}
\newtheorem{prop}[thm]{Proposition}
\theoremstyle{definition}
\renewcommand{\ker}{\operatorname{ker}}
\newcommand{\im}{\operatorname{im}}
\newcommand{\rank}{\operatorname{rank}}
\newcommand{\en}{\operatorname{End}}
\newcommand{\ig}{\operatorname{IG}}
\newcommand{\el}{\mbox{$\mathcal{L}$}}
\newcommand{\dee}{\mbox{$\mathcal{D}$}}
\newcommand{\ar}{\mbox{$\mathcal R$}}
\newcommand{\leqel}{\mbox{$\leq _{\mathcal L }$}}
\newcommand{\leqar}{\mbox{$\leq _{\mathcal R}$}}
\begin{document}

\title[Maximal subgroups of IG$(E)$]{Every group is the maximal subgroup of a naturally occurring free idempotent generated semigroup}
%\subjclass{AMS 2010 Subject Classification: 20 M 05, 20 M30}
\thanks{{\em AMS 2010 Subject Classification:} 20 M 05, 20 M 30\\ \phantom{au} Research supported by EPSRC grant no.
EP/I032312/1.The authors would like to thank Robert Gray and Nik Ruskuc for some useful discussions.}
\keywords{$G$-act, idempotent, biordered set}
\date{\today}

\author{Victoria Gould} 
\email{victoria.gould@york.ac.uk}
\author{Dandan Yang}
\email{ddy501@york.ac.uk}
\address{Department of Mathematics\\University
  of York\\Heslington\\York YO10 5DD\\UK}

\begin{abstract} Gray and Ruskuc have shown that any group $G$ occurs as the maximal subgroup of some free idempotent generated semigroup IG$(E)$ on a biordered set of idempotents $E$, thus  resolving a long standing open question.  Given the group $G$, they  make a careful choice for $E$ and use a certain amount of well developed machinery. Our 
aim here is to present a short and direct proof of the same result, moreover by using a naturally occuring biordered set. 

More specifically, for any free $G$-act $F_n(G)$ of
finite rank $n\geq 3$, we have that $G$ is a maximal subgroup of IG$(E)$ where $E$ is the biordered set of idempotents of End $F_n(G)$. Note that if $G$ is finite then so is 
End $F_n(G)$.
\end{abstract}

\maketitle

\section{Introduction} 

Let $S$ be a semigroup   and denote by
$\langle E(S)\rangle$ the subsemigroup of $S$ generated by the set of idempotents
$E(S)$ of $S$. If $S=\langle E(S)\rangle$, then we say that $S$ is {\em idempotent generated}.  In a landmark paper, Howie \cite{howie:1966} showed that every semigroup may be embedded into one that is idempotent generated, thus making transparent the importance of the role played by such semigroups. In the same article, Howie showed that the semigroup of non-bijective endomorphisms
of a finite set to itself is idempotent generated.  This latter  theorem was quickly followed by a `linearised' version due to Erd\"{o}s \cite{erdos:1967}, who proved that the
multiplicative semigroup of singular square matrices over a field is idempotent generated.  Fountain and Lewin \cite{fountainandlewin} subsumed these results into the wider context of endomorphism monoids of independence algbras. Indeed Howie's work can be extended in many further ways: see, for example, \cite{basisiii,putcha:2006}. We note here that sets and vector spaces over division rings are examples of independence algebras, as are free (left) $G$-acts over a group $G$. 

For any set of idempotents $E=E(S)$ there is a free object IG$(E)$ in the category of semigroups that are generated by $E$, given by the presentation
\[\ig(E)=\langle \overline{E}:  \bar{e}\bar{f}=\overline{ef},\, e,f\in E, \{ e,f\}
\cap \{ ef,fe\}\neq \emptyset\rangle,\]
where here $\overline{E}=\{ \bar{e}:e\in E\}$.\footnote{ It is more usual to identify elements of
$E$ with those of $\overline{E}$, but it helps the clarity of our later arguments to
make this distinction.} We say that $\ig(E)$ is the {\em free idempotent generated semigroup over $E$}.  The relations in the presentation for $\ig(E)$ correspond to taking {\em basic products} in $E$, that is, products between $e,f\in E$ where
$e$ and $f$ are comparable under one of the quasi-orders $\leqel$ or $\leqar$ defined on $S$. In fact, $E$ has an abstract characterisation as a {\em biordered set}, that is, a partial algebra equipped with two quasi-orders satisfying certain axioms. A celebrated result of Easdown \cite{easdown:1985} shows every biordered set $E$ occurs as $E(S)$ for some semigroup $S$, hence we lose nothing by assuming that our set of idempotents is of the form $E(S)$ for a semigroup $S$. 

The semigroup $\ig(E)$ has some pleasant properties. It follows from the definitions that the natural map
$\phi:\ig(E)\rightarrow S$, given by $\bar{e}\phi= e$, is a morphism onto $\langle E(S)\rangle$. Since any morphism preserves $\el$-classes and $\ar$-classes, 
certainly so does $\phi$. Consequently, $\phi$ is a morphism from any maximal subgroup
$H_{\bar{e}}$ of $\ig(E)$ onto $H_e$. More remarkably, we have the following lemma, taken from \cite{fitzgerald:1972,nambooripad:1979,easdown:1985}.

\begin{prop}\label{prop:remarkable} Let $E=E(S)$, the free idempotent generated semigroup $ \ig(E)$ and $\phi$ be as 
above. 
\begin{enumerate}\item[(i)] The restriction of $\phi$ to the set of idempotents of $\ig(E)$ is a bijection
onto E (and an isomorphism of biordered sets).
\item[(ii)] The morphism $\phi$  induces a bijection between the set of
all $\ar$-classes (respectively $\el$-classes) in the $\mathcal{D}$-class of $\bar{e}$ in $\ig(E)$ and the
corresponding set in $\langle E(S)\rangle$.
\end{enumerate}
\end{prop}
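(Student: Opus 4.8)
The plan is to establish the two parts separately, isolating in each the one step that needs genuine biordered-set input and dispatching the rest by direct manipulation of the defining relations. For part~(i), one half is immediate: applying the defining relation with $e=f$ gives $\bar e\,\bar e=\overline{ee}=\bar e$, so every $\bar e$ is idempotent, and since $\bar e\phi=e$ the assignment $e\mapsto\bar e$ is a section of $\phi$; hence $\phi$ maps $\{\bar e:e\in E\}$ bijectively onto $E$. The real content of~(i) is the reverse inclusion, that every idempotent of $\ig(E)$ is one of the $\bar e$, and this is the step where I would invoke the biordered-set machinery of the cited sources: starting from a word $w=\bar{e_1}\cdots\bar{e_n}$ that is idempotent, one observes that $w\phi=e_1\cdots e_n$ is an idempotent of $\langle E(S)\rangle$ and so lies in $E$, and then uses the axioms governing basic products and sandwich sets to reduce $w$, relation by relation, to a single generator. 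Granting this, the biordered isomorphism is formal: whenever $e,f$ are comparable in $S$ under \leqar\ or \leqel\ the relevant product is basic, so the relation $\bar e\,\bar f=\overline{ef}$ holds in $\ig(E)$ and comparabilities and basic products lift upward, while $\phi$, being a morphism, carries each such relation back down; thus $\phi$ restricted to $E(\ig(E))$ both preserves and reflects the two quasi-orders and the partial multiplication.

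For part~(ii) I would first note that, as $\bar e$ is idempotent, its \dee-class $D_{\bar e}$ in $\ig(E)$ is regular, and likewise $D_e$ in $\langle E(S)\rangle$; consequently every \ar-class in either \dee-class contains an idempotent. Since $\phi$ preserves \ar, \el\ and hence \dee, the rule $R_{\bar a}\mapsto R_{a\phi}$ is a well-defined map from the \ar-classes of $D_{\bar e}$ to those of $D_e$. The computation underpinning everything is that for idempotents $f,g$ lying in a common \dee-class one has $\bar f\mathrel{\ar}\bar g$ in $\ig(E)$ exactly when $f\mathrel{\ar}g$ in $S$: the forward direction is preservation by $\phi$, and the reverse holds because $f\mathrel{\ar}g$ for idempotents means $fg=g$ and $gf=f$, both basic products, so that $\bar f\,\bar g=\bar g$ and $\bar g\,\bar f=\bar f$ in $\ig(E)$ and therefore $\bar f\mathrel{\ar}\bar g$. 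Injectivity of the induced map is now clean: if $\bar a,\bar b\in D_{\bar e}$ with $a\phi\mathrel{\ar}b\phi$, choose idempotent generators $\bar g\in R_{\bar a}$ and $\bar h\in R_{\bar b}$ (available by regularity and part~(i)); then $g\mathrel{\ar}h$ in $S$, so $\bar g\mathrel{\ar}\bar h$, and hence $\bar a\mathrel{\ar}\bar g\mathrel{\ar}\bar h\mathrel{\ar}\bar b$.

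What remains is surjectivity, and this is where I expect the main obstacle to lie. Every \ar-class of $D_e$ contains an idempotent $g\in E$, so surjectivity amounts to showing that $\bar g$ actually lies in $D_{\bar e}$, that is, that $g\mathrel{\dee}e$ in $S$ forces $\bar g\mathrel{\dee}\bar e$ in $\ig(E)$. This lifting of \dee\ cannot be done by the naive device of connecting $e$ and $g$ through a chain of \ar- and \el-related \emph{idempotents}: the Brandt semigroup $B_2$ already exhibits two idempotents in one regular \dee-class whose only connecting paths pass through non-idempotents, so the required connecting elements of $\ig(E)$ must be manufactured from the biordered structure itself, via sandwich sets and singular squares as in the cited works. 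Once this \dee-lifting is secured, $\bar g\in D_{\bar e}$ and $\bar g\phi=g$ witness surjectivity, and the entire argument for \el-classes is exactly dual. In summary, both the idempotent-surjectivity in~(i) and the \dee-lifting in~(ii) are the genuinely hard inputs; everything else is a transparent transfer of basic products across the presentation together with regularity of the \dee-class.
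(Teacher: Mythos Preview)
The paper does not prove this proposition at all; it is quoted from the literature (Fitz-Gerald, Nambooripad, Easdown) and used as a black box. There is therefore no argument in the paper to compare your proposal against.

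On its own merits, your sketch is structurally sound. You have correctly separated the purely formal consequences of the presentation---that each $\bar e$ is idempotent, that $e\mapsto\bar e$ is a section of $\phi$, that basic products and hence \ar/\el-relations between idempotents transfer in both directions---from the two genuinely hard inputs: that every idempotent of $\ig(E)$ is already one of the generators $\bar e$, and that \dee-relations between idempotents lift from $\langle E(S)\rangle$ to $\ig(E)$. You are right that both of these require the biordered-set machinery of the cited sources and cannot be done by naive manipulation of the defining relations; your outline is honest about exactly where the external input is consumed.

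One small correction to your illustrative example: $B_2$ does not actually exhibit the obstacle you describe, because the proposition concerns $\langle E(S)\rangle$ rather than $S$. In $\langle E(B_2)\rangle=\{0,e,f\}$ the two nonzero idempotents satisfy $ef=fe=0$, so this subsemigroup is a three-element semilattice and $e,f$ are \emph{not} \dee-related there; the lifting question never arises. A genuine witness to the difficulty would need idempotents that remain \dee-related inside $\langle E(S)\rangle$ yet cannot be joined by an \ar/\el\ zig-zag of idempotents. Your diagnosis of why the surjectivity step is delicate is nevertheless correct; only the particular example misfires.
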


Biordered sets were introduced by Nambooripad \cite{nambooripad:1979} in his seminal work on the structure of regular semigroups, as was the notion of free idempotent generated semigroups $\ig(E)$. A significant conjecture, which although being of longstanding appears not to have appeared formally until 2002
\cite{mcelwee:2002}, purported that all maximal subgroups of $\ig(E)$ were free. This conjecture was disproved by Brittenham, Margolis and Meakin \cite{brittenham:2009}. 
The result motivating our current paper is the main theorem of \cite{gray:2012}, in which Gray and Ruskuc show that {\em any} group occurs as the maximal subgroup of some
$\ig(E)$. Their proof involves machinery developed by Ruskuc to handle presentations of maximal subgroups, and, given a group $G$, a careful construction of $E$. 

Of course the question remains of whether a group $G$ occurs as a maximal subgroup of some
 $\ig(E)$ for a `naturally occuring' $E$. The signs for this were positive, given recent work in \cite{brittenham:2011} and \cite{gray:2012a} showing (respectively) that the multiplicative group of non-zero elements of any division ring $Q$ occurs as a maximal subgroup of a rank 1 idempotent in $\ig(E)$, where $E$ is the biordered set of idempotents of
$M_n(Q)$ for $n\geq 3$, and that any symmetric group $\mathcal{S}_r$ occurs as a maximal subgroup of a rank $r$ idempotent in $\ig(F)$, where $F$ is the biordered set of idempotents of 
a full transformation monoid
$\mathcal{T}_n$ for some $n\geq r+2$. Note that in both these cases, $H_{\bar{e}}\cong
H_e$ for the idempotent in question. 

As pointed out above, sets and vector spaces are examples of {\em independence algebras}, as is any rank $n$ free (left) $G$-act $F_n(G)$. Elements of
$\en F_n(G)$ are endowed with rank (simply the size of a basis of the image) and it is known that the maximal subgroups of rank 1 idempotents are isomorphic to $G$. 
We elaborate on the structure of $\en F_n(G)$ in Section~\ref{sec:enF}, but stress that much that we write can be extracted from known results for independence algebras. 
Once these preliminaries are over,  Section~\ref{sec:theresult} demonstrates in a very direct manner (without appealing to presentations) that for $E=E(\en F_n(G))$ and 
$e\in \en F_n(G)$ with $n\geq 3$ and $\rank e=1$ we have $H_{\bar{e}}\cong H_e$, thus showing that any group occurs as a maximal subgroup of some {\em natural} $\ig(E)$. 
However, although none of the technicalities involving presentations appear here explicitly, we nevertheless have made use of the essence of some of the arguments
of \cite{dolinka:2012,gray:2012}, and more particularly earlier observations of Ruskuc \cite{ruskuc:1999}
concerning sets of generators for subgroups.

\section{Preliminaries: free $G$-acts, rank-1 $\mathcal{D}$-classes and singular squares}\label{sec:enF}

Let $G$ be a group and let $F_n(G)=\bigcup_{i=1}^n Gx_{i}$ be a
rank $n$ free left $G$-act with $n\in\mathbb{N},n\geq 3$. We recall that, as a set,
$F_n(G)$ consists of the set of formal symbols $\{ gx_i:g\in G, i\in [1,n]\}$,
where $[1,n]=\{ 1,\hdots,n\}$. For any $g,h\in G$ and $1\leq i,j\leq n$ we have
that $gx_i=hx_j$ if and only if $g=h$ and $i=j$; the action of $G$ is given by
$g(hx_i)=(gh)x_i$, and we usually identify $x_i$ with $1x_i$, where $1$ is the identity of $G$. For our main result, it is enough to take $n=3$, but for the sake of generality we proceed with arbitrary $n\geq 3$. Let End $F_n(G)$ denote the endomorphism monoid of $F_n(G)$ (with composition left-to-right). The image of  
$a\in\en F_n(G) $ being a $G$-subact, we can define the 
{\em rank} of $a $ to be the rank of $\im a$. 

Since $F_n(G)$ is an independence algebra, a direct application of Corollary 4.6 \cite{gould:1995} gives  a useful characterization of Green's relations on End $F_n(G)$.

\begin{lem}\cite{gould:1995} \label{lem:green}
For any $a, b\in \en F_n(G)$, we have the following:

\begin{enumerate}\item[(i)] $\im a =\im b$ if and only if $a\,\el\, b$;

\item[(ii)] $\ker a=\ker b$ if and only if $a\,\ar\, b$;

\item[(iii)] $\rank a= \rank b$ if and only if $a\,\dee\, b$.
\end{enumerate}
\end{lem}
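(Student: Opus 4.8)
The plan is to read off all three equivalences from two structural features of $F_n(G)$, both special cases of the general theory of independence algebras but completely transparent in this concrete setting. First, since $G$ is a group, every subact of $F_n(G)$ is a union of orbits $\bigcup_{i\in I}Gx_i$; in particular every subact is itself free, the generators $\{x_i:i\in I\}$ form a basis, and its rank is just $|I|$, an isomorphism invariant. Secondly, by freeness any assignment of the basis $x_1,\dots,x_n$ to elements of $F_n(G)$ extends to a (unique) endomorphism, and a basis of a subact extends to a basis of $F_n(G)$. I would also record at the outset that, composition being left-to-right, $\im(ab)=(\im a)b\subseteq\im b$ and $\ker a\subseteq\ker(ab)$, and that $\dee=\el\circ\ar$.

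For (i), I would argue both directions. If $a\,\el\,b$, say $a=sb$ and $b=ta$, then $\im a=(\im s)b\subseteq\im b$ and symmetrically $\im b\subseteq\im a$, so $\im a=\im b$. Conversely, assuming $\im a=\im b$, each $x_ia$ lies in $\im b$, so I may pick $w_i$ with $w_ib=x_ia$ and take $s$ to be the endomorphism with $x_is=w_i$; then $sb$ and $a$ agree on the basis, hence $a=sb$, and a symmetric choice gives $t$ with $b=ta$. Part (ii) is the dual story: from $a=bs$ and $b=at$ one reads off $\ker a=\ker b$ directly, while conversely, given $\ker a=\ker b$, the rule $(ub)s:=ua$ is a well-defined $G$-map on the subact $\im b$ (well-definedness being exactly $\ker b\subseteq\ker a$), which I extend to an endomorphism $s$ of $F_n(G)$ satisfying $bs=a$; a symmetric construction yields $b=at$, whence $a\,\ar\,b$.

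For (iii) I would use $\dee=\el\circ\ar$ together with (i) and (ii), so that $a\,\dee\,b$ holds precisely when there is $c$ with $\im c=\im a$ and $\ker c=\ker b$. If such $c$ exists then $\rank a=\rank c$, and since $F_n(G)/\ker c\cong\im c$ while $\ker c=\ker b$ gives $\im c\cong\im b$, also $\rank c=\rank b$, so $\rank a=\rank b$. Conversely, if $\rank a=\rank b=r$, then $\im a$ and $\im b$ are free subacts of rank $r$, with bases $\{y_1,\dots,y_r\}$ and $\{z_1,\dots,z_r\}$. Extending $\{z_1,\dots,z_r\}$ to a basis of $F_n(G)$ and sending $z_k\mapsto y_k$ (remaining basis elements mapped arbitrarily into $\im a$) defines an endomorphism $d$ whose restriction to $\im b$ is an isomorphism onto $\im a$. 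Then $c=bd$ satisfies $\im c=(\im b)d=\im a$, and, as $d$ is injective on $\im b$, $\ker c=\ker b$; thus $a\,\el\,c\,\ar\,b$ and $a\,\dee\,b$.

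I expect essentially all of the content to reside in the two structural facts of the first paragraph: once they are granted, each equivalence reduces to constructing an endomorphism that extends a prescribed map on a basis or on a subact. The only point demanding a little care — the part I would expect to be the genuine obstacle if the underlying algebra were less well behaved — is the extension step: verifying that the partial maps $(ub)s:=ua$ on $\im b$ and $z_k\mapsto y_k$ really do extend to global endomorphisms. Here this is immediate, precisely because subacts of $F_n(G)$ are free and their bases extend to bases of $F_n(G)$.
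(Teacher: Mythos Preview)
Your argument is correct, but note that the paper does not actually give its own proof of this lemma: it simply cites it as ``a direct application of Corollary~4.6'' of \cite{gould:1995}, the general independence-algebra result. So there is no proof in the paper to compare against in detail.

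That said, the comparison is still worth making. The paper's route is to invoke the ready-made theory of independence algebras, which packages exactly the two structural facts you isolate (subalgebras are free with well-defined rank; partial maps on bases extend) into a single citation. Your route unpacks these facts in the concrete setting of free $G$-acts---where they are transparent, since every subact is a union of full orbits $Gx_i$---and then runs the standard arguments for $\el$, $\ar$, $\dee$ by hand. What you gain is a self-contained proof requiring no external machinery; what the citation buys is brevity and the observation that nothing here is special to $G$-acts. Your identification of the extension step as the only non-formal point is accurate, and your handling of it (bases of subacts extend to the global basis $\{x_1,\dots,x_n\}$) is exactly right in this setting.
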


 Clearly  elements
$a,b\in \en F_n(G)$ depend only on 
their action on the free generators $\{ x_i:i\in [1,n]\}$
and it is therefore convenient to write \[a=\begin{pmatrix}
x_1&\hdots &x_n\\
a_1x_{1\overline{a}}&\hdots &a_nx_{n\overline{a}}\end{pmatrix}
\mbox{ and }b=\begin{pmatrix}
x_1&\hdots &x_n\\
b_1x_{1\overline{b}}&\hdots& b_nx_{n\overline{b}}\end{pmatrix}.\] Let $D=\{a\in \en F_n(G)\mid  \rank a=1\}$. Clearly 
$a,b\in D$ if and only if $\overline{a},\overline{b}$ are constant, and from Lemma~\ref{lem:green} we have $a\,\el\, b$ if and only if 
$\im\overline{a}=\im\overline{b}$.

\begin{lem}\label{lem:1kernels}
Let $a, b\in D$ be as above. Then $\ker a=\ker b$ if and only if
$(a_1,\hdots ,a_n)g=$ $(b_1,\hdots,b_n)$ for some $g\in G$. 
\end{lem}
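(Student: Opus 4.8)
The plan is to compute the kernel congruence of a rank-$1$ map explicitly and then simply read off the stated condition. First I would use that $a\in D$ forces the index map $\overline{a}$ to be constant, say $i\overline{a}=k$ for all $i$, so that every generator is sent into the single subact $Gx_k$; concretely $(gx_i)a=(ga_i)x_k$. Consequently the image index $k$ plays no role in deciding which pairs are identified, and a direct computation gives
\[(px_i,qx_j)\in\ker a\iff (px_i)a=(qx_j)a\iff pa_i=qa_j,\]
with the analogous description of $\ker b$ in terms of the $b_i$. This reduces the whole statement to a question purely about the coefficient tuples $(a_1,\dots,a_n)$ and $(b_1,\dots,b_n)$ in $G$.

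For the easy direction, suppose $a_it=b_i$ for all $i$ and some fixed $t\in G$. Then for any $p,q\in G$ and any $i,j$ we have $pb_i=qb_j$ if and only if $pa_it=qa_jt$, and right-cancelling $t$ in the group $G$ this holds if and only if $pa_i=qa_j$. By the description above this says exactly $\ker a=\ker b$.

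The forward direction is where the one genuine idea is needed: from an equality of congruences I must manufacture a single group element $t$. The trick is to exhibit canonical representatives. Observe that each element $a_i^{-1}x_i$ satisfies $(a_i^{-1}x_i)a=(a_i^{-1}a_i)x_k=x_k$, so $a_1^{-1}x_1,\dots,a_n^{-1}x_n$ are pairwise $\ker a$-related. Assuming $\ker a=\ker b$, they are therefore pairwise $\ker b$-related, which by the coefficient description forces $a_i^{-1}b_i$ to be independent of $i$. Setting $t$ to this common value $a_1^{-1}b_1$ yields $b_i=a_it$ for every $i$, that is, $(a_1,\dots,a_n)t=(b_1,\dots,b_n)$, as required.

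The only real subtlety, and what I would flag as the main obstacle, is bookkeeping the sides correctly: $G$ acts on the \emph{left} of $F_n(G)$ while the witnessing element $t$ must be attached on the \emph{right} of the coefficients, so both implications in fact hinge on right-cancellation in $G$ rather than on the left action. Once the kernel is written in the form $pa_i=qa_j$, everything else is routine group-theoretic manipulation.
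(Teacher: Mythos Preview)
Your proof is correct and follows essentially the same route as the paper: both first reduce $\ker a$ to the coefficient condition $pa_i=qa_j$, then in the forward direction feed the canonical preimages $a_i^{-1}x_i$ through the assumed equality of kernels to force $a_i^{-1}b_i$ to be constant, and in the converse simply right-cancel the witness $g$. Your write-up is a bit more explicit about the setup and the role of right-cancellation, but the underlying argument is identical.
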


\begin{proof}
Suppose $\ker a = \ker b$. For any $i,j\in [1,n]$
we have  $(a_{i}^{-1}x_{i})a=x_{i\overline{a}}=(a_j^{-1}x_j)a$ so that by assumption,
$(a_{i}^{-1}x_{i})b=(a_j^{-1}x_j)b$.
Consequently, 
$a_i^{-1}b_i=a_j^{-1}b_j=g\in G$ and it follows that
$(a_1,\hdots ,a_n)g=(b_1,\hdots,b_n)$.

Conversely, if $g\in G$ exists as given then for any $u,v\in G$ and $i,j\in [1,n]$ we have  $$(ux_{i})a=(vx_{j})a \Leftrightarrow ua_{i}=va_{j}\Leftrightarrow ua_{i}g=va_{j}g\Leftrightarrow ub_{i}=vb_{j}\Leftrightarrow (ux_{i})b=(vx_{j})b.$$
\end{proof}

We index the $\el$-classes in $D$ by $J=[1,n]$, where
the image of $a\in L_j$ is $Gx_j$, and we  index the  $\mathcal{R}$-classes of $D$ by  $I$, so that by Lemma~\ref{lem:1kernels}, the set $I$ is in bijective correspondence with $G^{n-1}$. From \cite[Theorem 4.9]{gould:1995} we have that $D$ is a completely simple semigroup.   We use $e_{ij}$ to denote the identity of the $\mathcal{H}$-class $H_{ij}$. For convenience we also suppose that
$1\in I$ and let 
\[e_{11}=\begin{pmatrix}
x_{1} & \cdots & x_{n} \\
x_{1} & \cdots & x_{1} \end{pmatrix}.\] Clearly, for any given $i\in I,j\in J$ we have
\[e_{1j}=\begin{pmatrix}
x_{1} & \cdots & x_{n} \\
x_{j} & \cdots & x_{j} \end{pmatrix}\mbox{ and }e_{i1}=\begin{pmatrix}
x_{1} & x_{2} & \cdots & x_{n} \\
x_{1} & a_{2i}x_{1} & \cdots & a_{ni}x_{1} \end{pmatrix},\] where $a_{2i},\cdots,a_{ni}\in G$. 

\begin{lem}\label{lem:Goccurs}
Every $\mathcal{H}$-class of $D$ is isomorphic to $G$.
\end{lem}

\begin{proof}
By standard semigroup theory, we know that any two group $\mathcal{H}$-classes in the same $\mathcal{D}$-class are isomorphic, so we need only show that $H_{11}$ is isomorphic to $G$. By Lemma~\ref{lem:1kernels} an element $a\in\en F_n(G)$ lies in $ H_{11}$ if and only if $a=a_g=\begin{pmatrix}
x_{1} & \cdots & x_{n} \\
gx_{1} & \cdots & gx_{1} \end{pmatrix}$, for some $g\in G$. It is easy to check that $\psi: H\rightarrow G$ defined by  $a_g\psi= g$ is an isomorphism. 
\end{proof}

Since $D$ is a completely simple semigroup, it is isomorphic to some Rees matrix semigroup $\mathcal{M}=\mathcal{M}(H_{11}; I, J; P)$, where 
 $P=(p_{j i})=(q_{j}r_{i})$, and we can take $q_{j}=e_{1j}\in H_{1j}$ and $r_{i}=e_{i1}\in H_{i1}$. Since the $q_j,r_i$ are chosen to be idempotents, it is clear that
$p_{1i},p_{j1}=e_{11}$ for all $i\in I, j\in J$. 

\begin{lem}\label{lem:thecolumns} For any $a_{g_2},\hdots ,a_{g_n}\in H_{11}$,  we can choose $k\in I$ such that the $k$th column of $P$ is  $(e_{11},a_{g_2},\hdots ,a_{g_n})^T$. 
\end{lem}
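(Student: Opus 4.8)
The plan is to exploit the explicit bijection $I\cong G^{n-1}$ recorded just after Lemma~\ref{lem:1kernels}, and then read off the entries of the relevant column of $P$ by directly composing the idempotents $e_{1j}$ and $e_{k1}$ that define the sandwich matrix.

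First I would recall that each $\ar$-class representative has the form
\[e_{i1}=\begin{pmatrix} x_1 & x_2 & \cdots & x_n \\ x_1 & a_{2i}x_1 & \cdots & a_{ni}x_1\end{pmatrix},\]
so that $e_{i1}$ carries $x_j\mapsto a_{ji}x_1$ under the convention $a_{1i}=1$, and the index $i\in I$ corresponds under the bijection with $G^{n-1}$ precisely to the tuple $(a_{2i},\hdots,a_{ni})$. Given the prescribed elements $a_{g_2},\hdots,a_{g_n}\in H_{11}$, I would therefore let $k\in I$ be the index corresponding to $(g_2,\hdots,g_n)\in G^{n-1}$, so that $e_{k1}$ fixes $x_1$ and sends $x_j\mapsto g_jx_1$ for $2\le j\le n$.

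It then remains to compute the $k$th column of $P$, whose entries are $p_{jk}=q_jr_k=e_{1j}e_{k1}$ for $j\in J$. Since $e_{1j}$ is the constant map $x_m\mapsto x_j$ and $e_{k1}$ subsequently sends $x_j\mapsto g_jx_1$ (reading $g_1=1$), the composite $e_{1j}e_{k1}$ is the constant map $x_m\mapsto g_jx_1$, which in the notation of Lemma~\ref{lem:Goccurs} is precisely $a_{g_j}$. For $j=1$ this yields $p_{1k}=a_{g_1}=e_{11}$, consistent with the already-noted fact that the first row of $P$ consists of copies of $e_{11}$, while for $2\le j\le n$ it yields $p_{jk}=a_{g_j}$. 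Thus the $k$th column of $P$ is $(e_{11},a_{g_2},\hdots,a_{g_n})^T$, as claimed.

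I do not expect a genuine obstacle here: the content is a short computation, and the only points requiring care are the left-to-right composition convention and the normalisation $a_{1i}=1$, which together guarantee that the top entry of each column is $e_{11}$ and that the bijection $I\cong G^{n-1}$ lands on the prescribed tuple.
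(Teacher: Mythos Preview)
Your proof is correct and follows essentially the same approach as the paper: choose $k\in I$ so that $e_{k1}=\begin{pmatrix} x_1&x_2&\cdots&x_n\\ x_1&g_2x_1&\cdots&g_nx_1\end{pmatrix}$, then read off the column entries. The only difference is that you spell out the computation $p_{jk}=e_{1j}e_{k1}=a_{g_j}$ explicitly, whereas the paper leaves this verification to the reader.
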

\begin{proof} Choose $k\in I$ such that $e_{k1}=
\begin{pmatrix} x_1&x_2&\hdots &x_n\\ x_1&g_2x_1&\hdots &g_nx_1\end{pmatrix}$ (note that if $g_2=\hdots =g_n=1$ then $k=1$).
\end{proof}

Let $E$ be a biordered set; from \cite{easdown:1985} we can assume that
$E=E(S)$ for some semigroup $S$. An {\it $E$-square} is a sequence $(e,f,g,h,e)$ of elements of $E$ with $e~\mathcal{R}$$~f~\mathcal{L}$$~g~\mathcal{R}$$~h~\mathcal{L}$$~e$. We draw such an $E$-square as $\begin{bmatrix} e&f\\ h&g\end{bmatrix}$. 

\begin{lem} \label{lem:esquarerectband} The elements of 
an $E$-square $\begin{bmatrix} e&f\\ h&g\end{bmatrix}$  form a rectangular band 
(within $S$) if and only if one (equivalently, all) of the following four equalities holds: $eg=f$, $ge=h$, $fh=e$ or $hf=g$.
\end{lem}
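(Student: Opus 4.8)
The plan is to read off, from the four Green's relations that define the $E$-square, the eight ``basic'' products among $e,f,g,h$, and then to show that forming a rectangular band is governed entirely by the four remaining ``diagonal'' products $eg,ge,fh,hf$. Recall that for idempotents $u\,\ar\,v$ forces $uv=v$ and $vu=u$, while $u\,\el\,v$ forces $uv=u$ and $vu=v$; applying this to the edges $e\,\ar\,f$, $f\,\el\,g$, $g\,\ar\,h$, $h\,\el\,e$ gives
\[ef=f,\quad fe=e,\quad fg=f,\quad gf=g,\quad gh=h,\quad hg=g,\quad he=h,\quad eh=e.\]
Labelling the four $\eh$-classes as the $2\times 2$ grid $H_e=R_e\cap L_e$, $H_f=R_e\cap L_f$, $H_g=R_h\cap L_f$, $H_h=R_h\cap L_e$, these eight products already coincide with the multiplication of the $2\times 2$ rectangular band $\{1,2\}\times\{1,2\}$ under $e\mapsto(1,1)$, $f\mapsto(1,2)$, $g\mapsto(2,2)$, $h\mapsto(2,1)$; the only undetermined products are $eg,ge,fh,hf$, and for the rectangular band these must equal $f,h,e,g$ respectively. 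So the lemma reduces to (a) showing the four displayed equalities are mutually equivalent, and (b) checking that they amount to the rectangular band condition.

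For (a) I would use a single cyclic substitution, invoking only associativity and one basic product per step. For example, assuming $eg=f$ and substituting into $hf$ gives $hf=h(eg)=(he)g=hg=g$; in the same manner one verifies $hf=g\Rightarrow ge=h$ (substitute $g=hf$, use $fe=e$ and $he=h$), then $ge=h\Rightarrow fh=e$, and finally $fh=e\Rightarrow eg=f$, closing the cycle and establishing the equivalence. I expect the only real care here to be the bookkeeping of the orientations $uv=v$ versus $uv=u$ and the choice of which basic product to insert so that the loop closes; a tempting but heavier alternative would argue that each diagonal product is an idempotent forced into the appropriate group $\eh$-class and then invoke uniqueness of idempotents there, which the substitution cycle happily sidesteps.

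To finish, I would assemble the two implications. If one, hence by (a) all, of the equalities holds, then together with the eight basic products every product of two of $e,f,g,h$ again lies in $\{e,f,g,h\}$, and the full multiplication table is exactly that of the $2\times 2$ rectangular band (equivalently $xyx=x$ holds throughout), so the four idempotents form a rectangular band. Conversely, if they do form a rectangular band, the identity $xyz=xz$ valid in any rectangular band gives $eg=efg=(ef)g=fg=f$, so $eg=f$ holds and the rest follow from (a). The principal difficulty is thus organisational rather than conceptual: spotting the substitution cycle and keeping every basic product correctly oriented; the underlying point being that, a priori, the diagonal products need not remain in the $\dee$-class of the square, and the four equalities are precisely what prevents them from dropping to a lower $\dee$-class.
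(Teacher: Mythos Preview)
Your proof is correct. The eight basic products are listed accurately, your cyclic chain $eg=f\Rightarrow hf=g\Rightarrow ge=h\Rightarrow fh=e\Rightarrow eg=f$ checks out step by step, and the converse via the rectangular-band identity $xyz=xz$ (so $eg=efg=fg=f$) is valid.

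The route differs from the paper's in the key implication. To pass from $eg=f$ to $ge=h$, the paper first observes that $gege=gfe=ge$ so $ge$ is idempotent, then invokes \cite[Proposition~2.3.7]{howie:1995}: since $f\in L_g\cap R_e$, the product $ge$ lies in $R_g\cap L_e=H_h$, whence $ge=h$ by uniqueness of idempotents in an $\eh$-class. The remaining two equalities then follow by direct substitution exactly as in your argument. In other words, the paper takes precisely the ``heavier alternative'' you describe and set aside. Your cyclic substitution is more elementary and self-contained (no appeal to Green's-class location of products), at the cost of being slightly less illuminating about \emph{why} the diagonal products land where they do. The paper's converse is dismissed as ``clear'', while you spell it out; both are fine.
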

\begin{proof}
The necessity is clear. To prove the sufficiency, without loss of generality, suppose that the equality $eg=f$ holds. We need to prove $ge=h$, $fh=e$ and $hf=g$. Notice that $gege=gfe=ge$, so $ge$ is idempotent. But, as $f\in L_{g}\cap R_{e}$
\cite[Proposition 2.3.7]{howie:1995} gives that $ge\in R_{g}\cap L_{e}$, which implies $ge=h$. Furthermore, $fh=fge=fe=e$ and $hf=heg=hg=g$, and so
$\{ e,f,g,h\}$  is a rectangular band.
\end{proof}

We will be interested in rectangular bands in completely simple semigroups. The following lemma makes explicit ideas used implicitly elsewhere. We remark that the notation for idempotents used in the lemma fits exactly with that above.

\begin{lem}\label{lem:sandwich} 
Let $\mathcal{M}=\mathcal{M}(G;I,J; P)$ be a Rees matrix semigroup over a group $G$
with sandwich matrix $P=(p_{ji})$. For any $i\in I, j\in J$ write $e_{ij}$ for the idempotent $(i,p_{ji}^{-1},j)$. Then an $E$-square $\begin{bmatrix} e_{ij}&e_{il}\\ e_{kj}&e_{kl}\end{bmatrix}$  is a rectangular band if and only if $p_{ji}^{-1}p_{jk}=p_{li}^{-1}p_{lk}$.
\end{lem}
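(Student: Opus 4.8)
The plan is to reduce everything to a single product computation in the Rees matrix semigroup and then invoke Lemma~\ref{lem:esquarerectband}. Matching the $E$-square notation $\begin{bmatrix} e&f\\ h&g\end{bmatrix}$ of that lemma against the given square, I read off $e=e_{ij}$, $f=e_{il}$, $g=e_{kl}$ and $h=e_{kj}$; one checks quickly that these satisfy the required relations ($\mathcal{R}$ is governed by the row index $i$ or $k$, and $\mathcal{L}$ by the column index $j$ or $l$), so this genuinely is an $E$-square. By Lemma~\ref{lem:esquarerectband} the square is a rectangular band precisely when the single equality $eg=f$ holds, that is, when $e_{ij}e_{kl}=e_{il}$.

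It then remains to translate this one equality into a condition on the entries of $P$. Recall that in $\mathcal{M}(G;I,J;P)$ the product is $(i,a,j)(k,b,l)=(i,ap_{jk}b,l)$, and that $e_{ij}=(i,p_{ji}^{-1},j)$. First I would verify that this triple is indeed idempotent, and then compute
\[ e_{ij}e_{kl}=(i,p_{ji}^{-1},j)(k,p_{lk}^{-1},l)=(i,\,p_{ji}^{-1}p_{jk}p_{lk}^{-1},\,l).\]
Comparing with $e_{il}=(i,p_{li}^{-1},l)$, the equality $e_{ij}e_{kl}=e_{il}$ holds if and only if $p_{ji}^{-1}p_{jk}p_{lk}^{-1}=p_{li}^{-1}$, which rearranges at once to $p_{ji}^{-1}p_{jk}=p_{li}^{-1}p_{lk}$. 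This is exactly the asserted criterion, completing the argument.

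There is no real obstacle here beyond bookkeeping: the only thing to watch is the index conventions, namely matching the positions $e,f,g,h$ of the abstract square to the correct pairs $(i,j),(i,l),(k,l),(k,j)$, and using the multiplication rule with $P=(p_{ji})$ indexed by $j\in J$ first and $i\in I$ second, so that the sandwich entry $p_{jk}$ inserted in forming $e_{ij}e_{kl}$ has its indices in the right sets. Once these are pinned down the computation is forced and the equivalence drops out. I would also note that, by the ``equivalently, all'' clause of Lemma~\ref{lem:esquarerectband}, testing the one equality $eg=f$ suffices, so no further cases need be examined.
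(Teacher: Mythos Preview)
Your argument is correct and is essentially identical to the paper's own proof: the paper also computes $e_{ij}e_{kl}=(i,p_{ji}^{-1}p_{jk}p_{lk}^{-1},l)$, compares with $e_{il}=(i,p_{li}^{-1},l)$, and then invokes Lemma~\ref{lem:esquarerectband}. The only cosmetic difference is that you spell out the identification of $e,f,g,h$ and the index conventions in more detail, whereas the paper compresses the calculation into a single displayed chain of equivalences.
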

\begin{proof} We have
\[e_{ij}e_{kl}=e_{il}\Leftrightarrow
(i,p_{ji}^{-1}, j)(k,p_{lk}^{-1},l)=(i,p_{li}^{-1},l)
\Leftrightarrow 
 p_{ji}^{-1}p_{jk}p_{lk}^{-1}=p_{li}^{-1}\Leftrightarrow p_{ji}^{-1}p_{jk}=p_{li}^{-1}p_{lk}.\]
 The result now follows from Lemma~\ref{lem:esquarerectband}. 
\end{proof}

An $E$-square $(e,f,g,h,e)$ is {\it singular} if, in addition, there exists $k\in E$ such that either:
\[
\left\{\begin{array}{ll}
ek=e,\, fk=f, \,ke=h,\,  kf=g \mbox{ or}\\
ke=e,\, kh=h,\, ek=f,\, hk=g.
\end{array}\right.
\]
We call a singular square for which the first condition holds an {\it up-down  singular square}, and that satisfying  the second condition a {\it left-right singular square}.

\section{Free idempotent generated semigroups}\label{sec:theresult}

Continuing the notation of the previous section, the rest of this paper 
is dedicated to prove that the maximal subgroup ${H}_{\overline{e_{11}}}$ is isomorphic to $H_{e_{11}}$ and hence by Lemma~\ref{lem:Goccurs} to $G$. For ease of notation we denote ${H}_{\overline{e_{11}}}$ by $\overline{H}$ and $H_e$ by $H$.

As remarked earlier, although we do not directly use the presentations for maximal subgroups of semigroups developed 
in \cite{ruskuc:1999} and adjusted and implemented for free idempotent generated semigroups in \cite{gray:2012}, we are nevertheless making use of ideas from those papers. In fact, our work may be considered as a simplification of previous approaches,
in particular \cite{dolinka:2012}, in the happy situation where a $\dee$-class is completely simple, our sandwich matrix has the property of Lemma~\ref{lem:thecolumns}, and the next lemma holds.

\begin{lem}\label{lem:singsquaresD}
An $E$-square $\begin{bmatrix} e&f\\ h&g\end{bmatrix}$ in $D$ is singular if and only
$\{ e,f,g,h\}$ is a rectangular band.
\end{lem}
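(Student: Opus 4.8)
The plan is to establish the two implications separately, noting that the forward implication (singular $\Rightarrow$ rectangular band) in fact holds for an arbitrary $E$-square and needs nothing about $D$, whereas the reverse implication is where the explicit structure of $D$ and the hypothesis $n\geq 3$ enter.

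I would first dispose of the easy direction. Suppose the $E$-square $\begin{bmatrix} e&f\\ h&g\end{bmatrix}$ is singular with witness $k\in E$. In the up-down case we have $ek=e$ and $kf=g$, so $eg=e(kf)=(ek)f=ef=f$, the final step because $e\,\mathcal{R}\,f$. In the left-right case we have $ke=e$ and $hk=g$, so $ge=(hk)e=h(ke)=he=h$, using $h\,\mathcal{L}\,e$. In either case one of the four equalities of Lemma~\ref{lem:esquarerectband} holds, so $\{e,f,g,h\}$ is a rectangular band.

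For the converse I would work with the explicit endomorphisms of $D$. Writing the square in Rees coordinates as $\begin{bmatrix} e_{ij}&e_{il}\\ e_{mj}&e_{ml}\end{bmatrix}$, so that its rows are the $\mathcal{R}$-classes $i,m$ and its columns the $\mathcal{L}$-classes $j,l$, and recording that $e_{ij}$ acts by $x_p\mapsto a_{pi}a_{ji}^{-1}x_j$ (normalising $a_{1i}=1$), I would use Lemma~\ref{lem:sandwich} to rewrite the rectangular band hypothesis as the single group identity $a_{ji}^{-1}a_{jm}=a_{li}^{-1}a_{lm}$. The goal is then to exhibit an idempotent $k\in E$ realising an up-down singular square. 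The natural candidate fixes the two images occurring in the square and transports its top row onto its bottom row; concretely I would set $x_jk=x_j$, $x_lk=x_l$ and $x_pk=a_{pm}a_{jm}^{-1}x_j$ for every remaining generator $x_p$ with $p\notin\{j,l\}$. One checks at once that $k^2=k$, so that indeed $k\in E$.

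The main work, and the step I expect to be the real obstacle, is to verify that this single $k$ satisfies all four defining equations $e_{ij}k=e_{ij}$, $e_{il}k=e_{il}$, $ke_{ij}=e_{mj}$ and $ke_{il}=e_{ml}$ at once. The first two are immediate from $x_jk=x_j$ and $x_lk=x_l$, since $\im e_{ij}=Gx_j$ and $\im e_{il}=Gx_l$. The last two are exactly where the hypothesis is forced to enter: evaluating $ke_{ij}$ and $ke_{il}$ on each generator and comparing with $e_{mj}$ and $e_{ml}$ reduces, after cancelling the $G$-factors, precisely to $a_{li}a_{ji}^{-1}=a_{lm}a_{jm}^{-1}$, which is a restatement of the identity supplied by the rectangular band hypothesis. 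Two points deserve care: the bookkeeping of the group elements $a_{pi}$ under composition of endomorphisms, and the role of $n\geq 3$, which is essential since it guarantees a generator $x_p$ with $p\notin\{j,l\}$ on which $k$ acts non-trivially — without such a generator (and with $i\neq m$) no witness could exist. Finally I would note that the degenerate squares, in which $i=m$ or $j=l$, are covered by the very same formula (the witness then having rank one, or being the identity), so the equivalence holds in all cases.
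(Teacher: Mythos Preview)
Your proof is correct and follows essentially the same route as the paper's: both construct an explicit rank-$2$ idempotent $k$ that fixes the two generators $x_j,x_l$ and sends each remaining generator into one of the two images, then verify the four up-down equations using the identity coming from the rectangular band hypothesis (the paper chooses the image $Gx_l$ and reads the identity directly from $eg=f$, $ge=h$ after WLOG taking $j=1$, $l=2$, whereas you choose $Gx_j$ and extract the identity via Lemma~\ref{lem:sandwich}---a symmetric and equally valid choice). Your side remarks are slightly imprecise (in the degenerate case $i=m$, $j\neq l$ your $k$ still has rank~$2$, not rank~$1$ or the identity; and for $n=2$ the rectangular band condition itself forces $i=m$, so that case is not a genuine obstruction), but these do not affect the argument.
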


\begin{proof} Suppose that
$\begin{bmatrix} e&f\\ h&g\end{bmatrix}$ is singular. If $k=k^2\in \en F_n(G)$ is such that $ek=e,\, fk=f, \,ke=h$ and   $kf=g$ then $eg=ekf=ef=f$. By Lemma~\ref{lem:esquarerectband}, 
$\{ e,f,g,h\}$ is a rectangular band. Dually for a left-right singular square. 

Conversely, suppose that  
$\{ e,f,g,h\}$ is a rectangular band. If $e\,\el\, f$, then
our $E$-square becomes  $\begin{bmatrix} e&e\\ g&g\end{bmatrix}$ and taking
$k=g$ we see this is an up-down singular square. 

 Without loss of generality we therefore suppose that 
$\{ 1\}=  \im\overline{e}=\im\overline{h}\neq\im\overline{f}=\im\overline{g}=\{ 2\}$. Following standard notation we write
$x_{i}e=e_{i}x_{1}$, $x_{i}h=h_{i}x_{1}$, $x_{i}f=f_{i}x_{2}$ and $x_{i}g=g_{i}x_{2}$. As $e$, $f$, $h$ and $g$ are idempotents, it is clear that $e_{1}=h_{1}=1$ and $f_{2}=g_{2}=1$.  Since 
 $\{ e,f,g,h\}$  is a rectangular band, we have $eg=f$ and so
 $x_{1}eg=x_{1}f$, that is, $g_{1}=f_{1}$. Similarly, from $ge=h$, we have $e_{2}=h_{2}$. Now we define $k\in \en F_n(G)$ by
 \[ x_{i}k = \left\{ \begin{array}{lll}
         x_{1} & \mbox{if $i=1$};\\
         x_{2} & \mbox{if $i=2$};\\
        g_{i}x_{2} & \mbox{else}.\end{array} \right. \]
Clearly  $k$ is idempotent and since $\im e$ and $\im f$ are contained in $\im k$
we have $ek=e$ and $fk=f$. Next we prove that $ke=h$. Obviously, $x_{1}ke=x_{1}h$ and $x_{2}ke=x_{2}h$ from $e_{2}=h_{2}$ obtained above. For other $i\in [1,n]$, 
we use the fact that from Lemma~\ref{lem:1kernels}, there is an $s\in G$ with $h_{i}=g_{i}s$, for all $i\in [1,n]$. Since
$$h_{i}e_{2}^{-1}=g_{i}sh_{2}^{-1}=(g_{i}s)(g_{2}s)^{-1}=g_{i}g_{2}^{-1}=g_{i}$$
we have  $x_{i}ke=(g_{i}x_2)e=g_ie_2x_{1}=h_{i}x_{1}=x_{i}h$ so that $ke=h$. It remains to show that $kf=g$. First, $x_{1}kf=f_{1}x_{2}=g_{1}x_{2}=x_{1}g$ and $x_{2}kf=x_{2}=x_{2}g$. For other $i$, since $x_2f=x_2$  the definition of $k$ gives $x_{i}kf=x_{i}g$. Hence $kf=g$ as required. Thus, by definition,
$\begin{bmatrix} e&f\\ h&g\end{bmatrix}$ is a singular square.\end{proof}

Notice that  the argument above proves the following:

\begin{cor}\label{cor:updown}
An $E$-square in $D$ is a singular square if and only if it is an up-down singular square.
\end{cor}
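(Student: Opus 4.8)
The plan is to deduce the corollary directly from the proof of Lemma~\ref{lem:singsquaresD} rather than to run a fresh argument. One implication is immediate: by definition every up-down singular square is singular, so it suffices to establish the reverse, namely that any singular $E$-square in $D$ is in fact up-down singular.

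First I would invoke Lemma~\ref{lem:singsquaresD} to replace the hypothesis ``singular'' by the combinatorial condition that $\{e,f,g,h\}$ is a rectangular band. The point is then to observe that the converse half of the proof of Lemma~\ref{lem:singsquaresD} produces, from the assumption that $\{e,f,g,h\}$ is a rectangular band, a witness $k=k^2$ satisfying \emph{precisely} the up-down equations $ek=e$, $fk=f$, $ke=h$ and $kf=g$, and at no stage does it appeal to the left-right condition. Concretely, in the degenerate case $e\,\el\, f$ the witness $k=g$ is chosen, which is up-down; and in the generic case $\im\overline{e}\neq\im\overline{f}$ the explicitly defined idempotent $k$ is verified in turn to satisfy all four up-down equalities. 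Thus in either case the singular square is up-down singular.

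Since the construction furnished by that proof is entirely one-sided, the main (and indeed only) thing to check is that nothing in the cited argument secretly requires the left-right alternative; this is apparent from the verifications $ek=e$, $fk=f$, $ke=h$, $kf=g$ carried out there. Assembling the two implications yields the stated equivalence, and I expect no genuine obstacle beyond recording this observation.
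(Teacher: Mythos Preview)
Your proposal is correct and matches the paper's own treatment exactly: the corollary is stated immediately after Lemma~\ref{lem:singsquaresD} with the remark that ``the argument above proves the following,'' i.e., the converse half of that lemma's proof explicitly produces an up-down witness in both the degenerate and generic cases. There is nothing to add.
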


\begin{lem}\label{lem:easy} 
For any idempotents $e,f,g\in D$, $ef=g$ implies $\overline{e}~\overline{f}=\overline{g}$.
\end{lem}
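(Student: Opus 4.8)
The plan is to reduce the statement to the existence of a suitable singular square and then to translate the defining relations of that square into $\ig(E)$. Writing the idempotents in Rees coordinates as $e=e_{ij}$ and $f=e_{kl}$, the hypothesis that $ef=g$ is idempotent forces $g=e_{il}$ (this being the unique idempotent of $H_{il}$), and the four idempotents $e=e_{ij}$, $g=e_{il}$, $f=e_{kl}$ and $e_{kj}$ become the corners of the $E$-square $\begin{bmatrix} e & g\\ e_{kj} & f\end{bmatrix}$; here $e\,\mathcal{R}\,g\,\mathcal{L}\,f\,\mathcal{R}\,e_{kj}\,\mathcal{L}\,e$ because $e,g$ lie in the $\mathcal{R}$-class indexed by $i$ and $g,f$ lie in the $\mathcal{L}$-class indexed by $l$. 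Reading off Lemma~\ref{lem:esquarerectband}, the rectangular-band condition for this square is exactly the equality $ef=g$, so $\{e,g,f,e_{kj}\}$ is a rectangular band.

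By Lemma~\ref{lem:singsquaresD} the $E$-square $\begin{bmatrix} e & g\\ e_{kj} & f\end{bmatrix}$ is therefore singular, and by Corollary~\ref{cor:updown} it is up-down singular. Thus there is an idempotent $k\in D$ with $ek=e$, $gk=g$, $ke=e_{kj}$ and $kg=f$. The point of passing to a singular square is that each of these equalities is a \emph{basic product}: for instance $\{k,g\}\cap\{kg,gk\}=\{k,g\}\cap\{f,g\}\ni g$, so the defining relations of $\ig(E)$ give $\overline{k}\,\overline{g}=\overline{kg}=\overline{f}$, and similarly $\overline{e}\,\overline{k}=\overline{ek}=\overline{e}$.

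It then remains only to compute in $\ig(E)$: using the two relations just obtained, $\overline{e}\,\overline{f}=\overline{e}\,(\overline{k}\,\overline{g})=(\overline{e}\,\overline{k})\,\overline{g}=\overline{e}\,\overline{g}$, and since $e\,\mathcal{R}\,g$ we have $eg=g$, again a basic product, whence $\overline{e}\,\overline{g}=\overline{eg}=\overline{g}$; combining these gives $\overline{e}\,\overline{f}=\overline{g}$. The main obstacle, and the reason the result is not completely immediate, is that $ef=g$ is itself \emph{not} a basic product in general (typically neither $ef$ nor $fe=e_{kj}$ lies in $\{e,f\}$), so one cannot simply invoke $\overline{e}\,\overline{f}=\overline{ef}$; the singular square is precisely the device that bridges this gap. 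I would finally dispose of the degenerate cases separately: if $e\,\mathcal{R}\,f$ then $g=ef=f$ and if $e\,\mathcal{L}\,f$ then $g=ef=e$, and in either case $ef=g$ is already a basic product, so the conclusion holds at once without recourse to the square.
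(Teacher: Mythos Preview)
Your argument is correct and follows essentially the same route as the paper's: form the $E$-square $\begin{bmatrix} e & g\\ h & f\end{bmatrix}$, observe via Lemma~\ref{lem:esquarerectband} that $ef=g$ makes it a rectangular band, apply Lemma~\ref{lem:singsquaresD} and Corollary~\ref{cor:updown} to get an up-down singularising idempotent, and then unwind the basic-product relations in $\ig(E)$ exactly as the paper does. Two small slips worth fixing: the singularising idempotent is \emph{not} in $D$ (the construction in the proof of Lemma~\ref{lem:singsquaresD} produces an idempotent of rank~$2$), only in $E$, though nothing in your computation uses membership in $D$; and you have reused the letter $k$ both for a Rees row index and for this idempotent, which is harmless but confusing.
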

\begin{proof} Since 
 $D$ is completely simple, we have $e\,\ar\, g\,\el\, f$ and since
 every $\mathcal{H}$-class in $D$ contains an idempotent, there exists some $h^{2}=h\in D$ such that $h\in L_{e}\cap R_{f}$. We therefore obtain an $E$-square $\begin{bmatrix} e&g\\ h&f\end{bmatrix}$, which by Lemma~\ref{lem:esquarerectband} is  a rectangular band. From Lemma~\ref{lem:singsquaresD} and Corollary~\ref{cor:updown}  we know it must be an up-down singular square, i.e. there exists some idempotent $k$ such that, $ek=e, gk=g, ke=h, \mbox{~and~} kg=f.$ Hence $$\overline{e}~\overline{f}=\overline{e}~\overline{kg}=\overline{e}~\overline{k}~\overline{g}=\overline{ek}~
\overline{g}=\overline{e}~\overline{g}=\overline{eg}=\overline{g}. $$\end{proof}

We now locate a set of generators for $\overline{H}$. 

\begin{lem}\label{lem:generators}
Every element in $\overline{H}$ is a product of elements of form $\overline{e_{11}}~\overline{e_{ij}}~\overline{e_{11}}$ and $(\overline{e_{11}}~\overline{e_{ij}}~\overline{e_{11}})^{-1}$, where 
$j\in [1,n]$ and $i\in I$.
\end{lem}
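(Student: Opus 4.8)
The plan is to realize $\overline{H}$ as the homomorphic image, under $\phi$, of a subgroup that we first pin down inside the larger semigroup $\ig(E)$, and then to push the description of generators for $H = H_{e_{11}}$ in $D$ back up along $\phi$. Recall that $\phi\colon\ig(E)\to \langle E\rangle$ restricts to a morphism from $\overline{H}$ onto $H$, and $D$ is completely simple, isomorphic to a Rees matrix semigroup $\mathcal{M}(G;I,J;P)$ with the idempotents $e_{ij}$ indexed exactly as in Lemma~\ref{lem:sandwich}. So the first step is downstairs: I would show that $H=H_{e_{11}}$ is generated, as a group, by the elements $e_{11}e_{ij}e_{11}$ with $i\in I$, $j\in J=[1,n]$. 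This is the classical Rees matrix computation underlying Ruskuc's generating sets: in $\mathcal{M}$ the product $e_{11}e_{ij}e_{11}$ lands in $H_{11}$ and equals the group element $p_{11}^{-1}p_{1i}p_{ji}^{-1}p_{j1}$ (up to the normalisation $p_{1i}=p_{j1}=e_{11}$ noted after Lemma~\ref{lem:Goccurs}), and as $i,j$ range over $I\times J$ these products, together with their inverses, generate $H_{11}\cong G$. I would cite or briefly reprove this standard fact.

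The second, and main, step is to transfer this upstairs. Let $w\in\overline{H}$. Since $\ig(E)$ is generated by $\overline{E}$ and every element of $E$ lies in $D$ (the relevant $\dee$-class), I would write $w$ as a product $\overline{e_{i_1j_1}}\cdots\overline{e_{i_mj_m}}$ of generators $\overline{e_{ij}}$. The key idea, exactly as in \cite{ruskuc:1999,gray:2012}, is a \emph{Schreier-style rewriting}: insert idempotents to bracket each letter so that the product becomes an alternating product of the \emph{return} elements $\overline{e_{11}}\,\overline{e_{ij}}\,\overline{e_{11}}$, each of which lies in $\overline{H}$ because $\phi$ maps it into $H_{11}$ and, by Proposition~\ref{prop:remarkable}(i), idempotents and their Green's structure lift faithfully, so the bracketed element is genuinely in the group $\mathcal{H}$-class $\overline{H}$. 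Concretely I would use the relations of $\ig(E)$ to rewrite, at each adjacent pair of letters, by inserting $\overline{e_{11}}$ between consecutive blocks; here Lemma~\ref{lem:easy} is the engine, since it guarantees that wherever a basic product of idempotents in $D$ collapses, the same collapse holds after barring, so the rewriting does not change the element represented in $\ig(E)$.

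The hard part will be controlling these insertions so that the rewriting is valid in $\ig(E)$ (not merely in $S$) and so that each bracketed block $\overline{e_{11}}\,\overline{e_{ij}}\,\overline{e_{11}}$ is literally the claimed generator rather than some conjugate one must separately account for. Two technical points need care: first, that consecutive bracketing idempotents can be absorbed, i.e. $\overline{e_{11}}\,\overline{e_{11}}=\overline{e_{11}}$ and more generally that the bridging products $\overline{e_{j1}}\,\overline{e_{11}}\,\overline{e_{1i}}$ telescope correctly, which is where the normalisation $p_{1i}=p_{j1}=e_{11}$ and Lemma~\ref{lem:easy} combine; and second, that each elementary rewriting step is an application either of a defining relation of $\ig(E)$ or of a singular-square identity, the latter supplied by Lemma~\ref{lem:singsquaresD} together with Corollary~\ref{cor:updown}, which tell us precisely which $E$-squares in $D$ are singular (all the rectangular ones) and hence which idempotent insertions are legitimate. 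Once the rewriting is in place, $w$ is exhibited as a product of the listed generators and their inverses, and I would close the argument. I expect the bookkeeping of indices in the telescoping step to be the genuine obstacle; everything else is a direct consequence of the lemmas already established.
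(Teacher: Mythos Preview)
Your plan has the right shape in its second step, but there are two genuine gaps.

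First, and most seriously, the assertion that ``every element of $E$ lies in $D$'' is false: $E=E(\en F_n(G))$ contains idempotents of every rank $1,\ldots,n$, whereas $D$ is only the rank-$1$ $\dee$-class. So from the bare fact that $w\in\overline{H}$ is a product of barred idempotents you cannot conclude that those idempotents are among the $\overline{e_{ij}}$. What is needed here is precisely the Fitzgerald--Hall result (Lemma~1 of \cite{hall:1973}, building on \cite{fitzgerald:1972}) that every element of a group $\eh$-class in an idempotent-generated regular semigroup is a product of idempotents \emph{from its own $\dee$-class}. The paper invokes exactly this at the outset; without it the rewriting never gets started.

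Second, your Step~1 (showing that $H$ is generated by the $e_{11}e_{ij}e_{11}$) is irrelevant and cannot be ``pushed back up along $\phi$''. The map $\phi\colon\overline{H}\to H$ is a surjection whose injectivity is the very content of Theorem~\ref{thm:theresult}; generators of the image say nothing about generators of the domain. Drop this step entirely.

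Once you have $w=\overline{e_{i_1j_1}}\cdots\overline{e_{i_kj_k}}$ with all factors from $D$, your rewriting idea is essentially the paper's. The concrete insertion is $\overline{e_{i_tj_t}}=\overline{e_{i_tj_t}}\,\overline{e_{11}}\,\overline{e_{1j_t}}$, valid because $e_{11}\,\ar\,e_{1j_t}$ and $e_{i_tj_t}\,\el\,e_{1j_t}$ are basic products in $E$; after regrouping one is left with blocks of the form $\overline{e_{11}}\,\overline{e_{1t}}\,\overline{e_{ij}}\,\overline{e_{11}}$, and a short computation using only $\ar$- and $\el$-basic products shows each such block equals $(\overline{e_{11}}\,\overline{e_{it}}\,\overline{e_{11}})^{-1}(\overline{e_{11}}\,\overline{e_{ij}}\,\overline{e_{11}})$. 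Note that neither Lemma~\ref{lem:easy} nor singular squares are needed for this particular lemma: the defining relations of $\ig(E)$ for $\ar$- and $\el$-related pairs suffice.
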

\begin{proof} By Lemma 1 of \cite{hall:1973}, which itself uses the techniques of
\cite{fitzgerald:1972},  every element of $\overline{H}$  is a product of idempotents of the form $\overline{e_{ij}}$. Let $a\in \overline{H}$. If $a=\overline{e_{ij}}$ then $i=j=1$ and clearly, $a=\overline{e_{11}}~\overline{e_{ij}}~\overline{e_{11}}$. Suppose $a=\overline{e_{i_{1}j_{1}}}\cdots \overline{e_{i_{k}j_{k}}}=\overline{e_{11}}~\overline{e_{i_{1}j_{1}}}\cdots \overline{e_{i_{k}j_{k}}}~\overline{e_{11}}$, where $k\geq 2$. Notice that $e_{11}~\mathcal{R}$$~e_{i_{1}j_{1}}$ in End $F_n(G)$, which implies that $i_{1}=1$. Thus we have
$$
\begin{aligned}
a &=\overline{e_{11}}~\overline{e_{1j_{1}}}~\overline{e_{i_{2}j_{2}}}~\cdots \overline{e_{i_{k}j_{k}}}~\overline{e_{11}}\ \ \ \ \\
                                              & =
\overline{e_{11}}~\overline{e_{1j_{1}}}~(\overline{e_{i_{2}j_{2}}}~\overline{e_{11}}~\overline{e_{1j_{2}}})~\overline{e_{i_{3}j_{3}}}~\cdots \overline{e_{i_{k}j_{k}}}~\overline{e_{11}}\ \ \ \ \  \\
                                              & =
(\overline{e_{11}}~\overline{e_{1j_{1}}}~\overline{e_{i_{2}j_{2}}}~\overline{e_{11}})~\overline{e_{1j_{2}}}~\overline{e_{i_{3}j_{3}}}~\cdots \overline{e_{i_{k}j_{k}}}~\overline{e_{11}}\ \ \ \ \  \\
                                              &=
(\overline{e_{11}}~\overline{e_{1j_{1}}}~\overline{e_{i_{2}j_{2}}}~\overline{e_{11}})\overline{e_{11}}~\overline{e_{1j_{2}}}
~(\overline{e_{i_{3}j_{3}}}~\overline{e_{11}}~\overline{e_{1j_{3}}})~\overline{e_{i_{4}j_{4}}}\cdots \overline{e_{i_{k}j_{k}}}~\overline{e_{11}}  \\
                                              &=
(\overline{e_{11}}~\overline{e_{1j_{1}}}~\overline{e_{i_{2}j_{2}}}~\overline{e_{11}})
(\overline{e_{11}}~\overline{e_{1j_{2}}}~\overline{e_{i_{3}j_{3}}}~\overline{e_{11}})\cdots (\overline{e_{11}}~\overline{e_{1j_{k-1}}}~\overline{e_{i_{k}j_{k}}}~\overline{e_{11}}).
\end{aligned}
$$
Now,  for any $t\in [1,n]$ and $i,j\in I$, we have
$$
\begin{aligned}
(\overline{e_{11}}~\overline{e_{it}}~\overline{e_{11}})(\overline{e_{11}}~\overline{e_{1t}}~\overline{e_{ij}}~\overline{e_{11}})& =\overline{e_{11}}~\overline{e_{it}}~\overline{e_{1t}}~\overline{e_{ij}}~\overline{e_{11}}\ \ \ \ \\
                                              &=
\overline{e_{11}}~\overline{e_{it}}~\overline{e_{ij}}~\overline{e_{11}} \\
                                              &=
\overline{e_{11}}~\overline{e_{ij}}~\overline{e_{11}}.
\end{aligned}
$$
From Proposition~\ref{prop:remarkable} (ii) we have that any product of elements
$\overline{e_{ij}}$ that begins with some $\overline{e_{1j}}$ and ends in some  $\overline{e_{i1}}$ lies in
$\overline{H}$.
We have therefore shown that in $\overline{H}$ we have $\overline{e_{11}}~\overline{e_{1t}}~\overline{e_{ij}}~\overline{e_{11}}=(\overline{e_{11}}~\overline{e_{it}}~\overline{e_{11}})^{-1}
~(\overline{e_{11}}~\overline{e_{ij}}~\overline{e_{11}})$. Hence, every element in $\overline{H}$ is a product of the elements of form $\overline{e_{11}}~\overline{e_{ij}}~\overline{e_{11}}$ and
their inverses.  
\end{proof}

\begin{lem}\label{lem:inverses}
For any $i\in I$ and  $j\in [1,n]$ we have that $\overline{e_{1j}}~\overline{e_{i1}}$ is the inverse of $\overline{e_{11}}~\overline{e_{ij}}~\overline{e_{11}}$.
\end{lem}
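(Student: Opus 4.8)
The plan is to verify directly, using only the defining relations (basic products) of $\ig(E)$, that the product of $a=\overline{e_{11}}~\overline{e_{ij}}~\overline{e_{11}}$ and $b=\overline{e_{1j}}~\overline{e_{i1}}$ collapses to the identity $\overline{e_{11}}$ of the group $\overline{H}$. I stress that we cannot shortcut this by passing through $\phi$: although $\phi$ sends both elements into $H_{e_{11}}\cong G$ where they are visibly mutually inverse, the injectivity of $\phi$ on $\overline{H}$ is exactly what the paper is working towards, so it is not available here. Everything must be done upstairs in $\ig(E)$.

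First I would record that both $a$ and $b$ lie in $\overline{H}$. The element $a$ is one of the generators identified in Lemma~\ref{lem:generators}, hence lies in $\overline{H}$; while $b=\overline{e_{1j}}~\overline{e_{i1}}$ begins with an element of the $\mathcal{R}$-class $R_1$ and ends with one of the $\mathcal{L}$-class $L_1$, so lies in $\overline{H}$ by Proposition~\ref{prop:remarkable}(ii), exactly as argued in the proof of Lemma~\ref{lem:generators}. Since $\overline{H}$ is a group with identity $\overline{e_{11}}$, it then suffices to show $ab=\overline{e_{11}}$, from which $b=a^{-1}$ follows at once.

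The heart of the argument is that in the completely simple semigroup $D$ each consecutive pair of idempotents in the word $e_{11}~e_{ij}~e_{11}~e_{1j}~e_{i1}$ is comparable under $\leqar$ or $\leqel$, so each product is a basic product and may be carried out in $\ig(E)$. Concretely, for idempotents $e,f$ in a common $\mathcal{R}$-class one has $ef=f$, and for idempotents in a common $\mathcal{L}$-class one has $ef=e$; in either case $\{e,f\}\cap\{ef,fe\}\neq\emptyset$, so $\overline{e}~\overline{f}=\overline{ef}$. Reading the $\mathcal{R}$- and $\mathcal{L}$-classes off the index convention $e_{ij}\in R_i\cap L_j$, I would collapse the word one adjacent pair at a time:
\[
(\overline{e_{11}}~\overline{e_{ij}}~\overline{e_{11}})(\overline{e_{1j}}~\overline{e_{i1}})
=\overline{e_{11}}~\overline{e_{ij}}~\overline{e_{1j}}~\overline{e_{i1}}
=\overline{e_{11}}~\overline{e_{ij}}~\overline{e_{i1}}
=\overline{e_{11}}~\overline{e_{i1}}
=\overline{e_{11}},
\]
where the first equality uses $\overline{e_{11}}~\overline{e_{1j}}=\overline{e_{1j}}$ (as $e_{11},e_{1j}\in R_1$), the second uses $\overline{e_{ij}}~\overline{e_{1j}}=\overline{e_{ij}}$ (as $e_{ij},e_{1j}\in L_j$), the third uses $\overline{e_{ij}}~\overline{e_{i1}}=\overline{e_{i1}}$ (as $e_{ij},e_{i1}\in R_i$), and the fourth uses $\overline{e_{11}}~\overline{e_{i1}}=\overline{e_{11}}$ (as $e_{11},e_{i1}\in L_1$). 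Each step is a single application of a defining relation, and the final term is $\overline{e_{11}}$, as required.

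I expect no serious obstacle. The only points needing care are purely bookkeeping: confirming at each step that the two adjacent idempotents really are $\mathcal{R}$- or $\mathcal{L}$-related (immediate from the indexing), and noting explicitly that $D$ is closed under products, so the whole computation stays inside the single $\mathcal{D}$-class and the relations invoked are genuinely among the defining relations of $\ig(E)$. One could equally well run the mirror computation to obtain $ba=\overline{e_{11}}$, but in the group $\overline{H}$ the single identity $ab=\overline{e_{11}}$ already yields the claim.
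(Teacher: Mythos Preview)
Your proof is correct and follows exactly the same route as the paper: the identical four-step collapse
\[
(\overline{e_{11}}~\overline{e_{ij}}~\overline{e_{11}})(\overline{e_{1j}}~\overline{e_{i1}})
=\overline{e_{11}}~\overline{e_{ij}}~\overline{e_{1j}}~\overline{e_{i1}}
=\overline{e_{11}}~\overline{e_{ij}}~\overline{e_{i1}}
=\overline{e_{11}}~\overline{e_{i1}}
=\overline{e_{11}},
\]
with each step a basic product between $\ar$- or $\el$-related idempotents. You simply supply more explicit justification for each reduction and for membership in $\overline{H}$ than the paper does.
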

\begin{proof} We simply calculate:
$$(\overline{e_{11}}~\overline{e_{ij}}~\overline{e_{11}})(\overline{e_{1j}}~\overline{e_{i1}})=\overline{e_{11}}~
\overline{e_{ij}}~\overline{e_{1j}}~\overline{e_{i1}}=\overline{e_{11}}~\overline{e_{ij}}~\overline{e_{i1}}=\overline{e_{11}}
~\overline{e_{i1}}=\overline{e_{11}}.$$ 
\end{proof}

The next result is immediate from 
Lemmas~\ref{lem:easy} and ~\ref{lem:inverses}. 

\begin{lem}\label{lem:firstrelation} 
If $e_{1j}e_{i1}=e_{11}$, then $\overline{e_{11}}~\overline{e_{ij}}~\overline{e_{11}}=\overline{e_{11}}.$
\end{lem}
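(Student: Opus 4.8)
The plan is to chain together the two cited lemmas in the group $\overline{H}$, using only that $\overline{e_{11}}$ is the identity element of this maximal subgroup. First I would note that the hypothesis $e_{1j}e_{i1}=e_{11}$ is a statement about idempotents lying in the completely simple $\dee$-class $D$, so Lemma~\ref{lem:easy} applies verbatim: taking $e=e_{1j}$, $f=e_{i1}$ and $g=e_{11}$, it yields immediately that $\overline{e_{1j}}~\overline{e_{i1}}=\overline{e_{11}}$ in $\ig(E)$.

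Next I would invoke Lemma~\ref{lem:inverses}, which tells us that $\overline{e_{1j}}~\overline{e_{i1}}$ is precisely the inverse, within the group $\overline{H}$, of the element $\overline{e_{11}}~\overline{e_{ij}}~\overline{e_{11}}$. Combining this with the previous step, the inverse of $\overline{e_{11}}~\overline{e_{ij}}~\overline{e_{11}}$ equals $\overline{e_{11}}$. Since $\overline{e_{11}}$ is the identity of $\overline{H}$, and in any group the only element whose inverse is the identity is the identity itself, we conclude $\overline{e_{11}}~\overline{e_{ij}}~\overline{e_{11}}=\overline{e_{11}}$, as required. (For this last step one should keep in mind that $\overline{e_{11}}~\overline{e_{ij}}~\overline{e_{11}}$ genuinely lies in $\overline{H}$, which is guaranteed by Proposition~\ref{prop:remarkable}(ii) since the product begins with $\overline{e_{11}}$ and ends with $\overline{e_{11}}$, i.e.\ with factors of the required $\overline{e_{1j'}}$ and $\overline{e_{i'1}}$ shapes.)

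There is essentially no obstacle here: the statement is a formal consequence of the two preceding lemmas together with the elementary group fact that an element equals the identity whenever its inverse does. The only point deserving a moment's care is the implicit bookkeeping that both $\overline{e_{1j}}~\overline{e_{i1}}$ and $\overline{e_{11}}~\overline{e_{ij}}~\overline{e_{11}}$ are being multiplied inside the group $\overline{H}$ with identity $\overline{e_{11}}$, so that the word "inverse" in Lemma~\ref{lem:inverses} carries its group-theoretic meaning rather than merely that of an inverse in the sense of semigroup theory.
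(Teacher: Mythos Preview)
Your proof is correct and is essentially the same as the paper's own argument, which simply states that the result is immediate from Lemmas~\ref{lem:easy} and~\ref{lem:inverses}. You have merely spelled out the two-line deduction the paper leaves implicit.
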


\begin{lem}\label{lem:firstequalities} Let $i,l\in I$ and $j,k\in J$.
\begin{enumerate}\item[(i)] If $e_{1j}e_{i1}=e_{1j}e_{l1}$, that is, $p_{ji}=p_{jl}$ in the sandwich matrix $P$, then $\overline{e_{11}}~\overline{e_{ij}}~\overline{e_{11}}=\overline{e_{11}}~\overline{e_{lj}}~\overline{e_{11}}.$
\item[(ii)] If $e_{1j}e_{i1}=e_{1k}e_{i1}$, that is, $p_{ji}=p_{ki}$ in the sandwich matrix $P$, then $\overline{e_{11}}~\overline{e_{ij}}~\overline{e_{11}}=\overline{e_{11}}~\overline{e_{ik}}~\overline{e_{11}}.$
\end{enumerate}
\end{lem}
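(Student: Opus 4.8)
The plan is to show each equality by finding an explicit $E$-square in $D$ to which Lemma~\ref{lem:singsquaresD} (together with Corollary~\ref{cor:updown}) and Lemma~\ref{lem:easy} can be applied, exactly mirroring the strategy already used in Lemmas~\ref{lem:inverses} and~\ref{lem:firstrelation}. The hypotheses $p_{ji}=p_{jl}$ and $p_{ji}=p_{ki}$ are precisely coincidences of entries of the sandwich matrix, so they should translate, via Lemma~\ref{lem:sandwich}, into the statement that certain $E$-squares are rectangular bands; then by Lemma~\ref{lem:singsquaresD} these are singular, and one can pass the coincidence through $\phi$ into $\overline{H}$.

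For part (i), I would start from the generator $\overline{e_{11}}~\overline{e_{ij}}~\overline{e_{11}}$ and try to rewrite it as $\overline{e_{11}}~\overline{e_{lj}}~\overline{e_{11}}$. The condition $p_{ji}=p_{jl}$ says $e_{1j}e_{i1}=e_{1j}e_{l1}$, i.e. the two $\mathcal{R}$-classes $i$ and $l$ are sent to the same $\mathcal{H}$-class when multiplied on the left by $e_{1j}$. The natural $E$-square to consider is $\begin{bmatrix} e_{ij}&e_{i1}\\ e_{lj}&e_{l1}\end{bmatrix}$ (or the transpose, depending on orientation): using Lemma~\ref{lem:sandwich} with $p_{ji}=p_{jl}$ one checks the relevant sandwich-matrix condition $p_{ji}^{-1}p_{j?}=\cdots$ reduces to a triviality, so the square is a rectangular band, hence singular. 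Applying Lemma~\ref{lem:easy} to the resulting idempotent products lets me replace $\overline{e_{ij}}$-type factors by $\overline{e_{lj}}$-type factors inside $\overline{H}$, giving the claimed equality after flanking by $\overline{e_{11}}$ and absorbing redundant idempotents as in Lemma~\ref{lem:generators}. Part (ii) is entirely dual: here $p_{ji}=p_{ki}$ fixes the $\mathcal{R}$-class $i$ and varies the $\el$-classes $j,k$, so I would use the $E$-square $\begin{bmatrix} e_{ij}&e_{ik}\\ \ast&\ast\end{bmatrix}$ built from rows $j$ and $k$ and run the same argument.

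The main obstacle I anticipate is bookkeeping rather than conceptual: I must choose the four corners of the $E$-square so that (a) it genuinely is an $E$-square (the $\mathcal{R}$ and $\el$ relations hold, which is automatic from the index structure of $D$ since $a\,\el\,b$ is controlled by $\im\overline a$ and $a\,\ar\,b$ by $\ker a$, cf. Lemmas~\ref{lem:green} and~\ref{lem:1kernels}), and (b) the rectangular-band equality delivered by Lemma~\ref{lem:sandwich} is exactly the sandwich-matrix coincidence in the hypothesis. Getting the indices and the left/right orientation to line up so that Lemma~\ref{lem:easy} produces the $\overline{e_{11}}$-flanked products in the right order is the delicate part; once the correct square is identified, the passage to $\overline{H}$ via $\overline{ef}=\overline e\,\overline f$ on singular squares is a short computation of the same flavour as the displayed calculation in Lemma~\ref{lem:inverses}.
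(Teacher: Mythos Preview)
Your approach is exactly the paper's: for (i) the paper uses the $E$-square $\begin{bmatrix} e_{i1}&e_{ij}\\ e_{l1}&e_{lj}\end{bmatrix}$ (your square with columns swapped), checks via Lemma~\ref{lem:sandwich} that $p_{1i}^{-1}p_{1l}=e_{11}=p_{ji}^{-1}p_{jl}$ (the left equality being the normalisation of $P$, the right being the hypothesis), deduces $e_{ij}=e_{i1}e_{lj}$, lifts this via Lemma~\ref{lem:easy}, and absorbs $\overline{e_{i1}}$ into $\overline{e_{11}}$; (ii) is dual with the missing row $\ast$ taken to be $1$, giving the square $\begin{bmatrix} e_{ij}&e_{ik}\\ e_{1j}&e_{1k}\end{bmatrix}$ and $\overline{e_{ij}}=\overline{e_{ik}}\,\overline{e_{1j}}$. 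The only thing you left implicit is that the ``triviality'' comes precisely from choosing the extra index to be $1$, where $P$ is normalised.
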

\begin{proof} (i) Notice that $p_{1i}^{-1}p_{1l}=e_{11}=p_{ji}^{-1}p_{jl}$, so that
from Lemma~\ref{lem:sandwich} we have that the elements of 
$\begin{bmatrix}e_{i1}&e_{ij}\\e_{l1}&e_{lj}\end{bmatrix}$ form a rectangular band.
Thus $e_{ij}=e_{i1}e_{lj}$ and so from Lemma~\ref{lem:easy} we have that 
 $\overline{e_{ij}}=\overline{e_{i1}}\,\overline{e_{lj}}$. So, $\overline{e_{11}}~\overline{e_{ij}}~\overline{e_{11}}=\overline{e_{11}}~\overline{e_{i1}}~\overline{e_{lj}}~\overline{e_{11}}=
\overline{e_{11}}~\overline{e_{lj}}~\overline{e_{11}}.$

(ii) Here we have that  $p_{ji}^{-1}p_{j1}=p_{ki}^{-1}p_{k1}$, 
so that  $\begin{bmatrix} e_{ij}&e_{ik}\\ e_{1j}&e_{1k}\end{bmatrix}$ is a rectangular band and $\overline{e_{ij}}=\overline{e_{ik}}~\overline{e_{1j}}$. So, $\overline{e_{11}}~\overline{e_{ij}}~\overline{e_{11}}=\overline{e_{11}}~\overline{e_{ik}}~\overline{e_{1j}}~\overline{e_{11}}=
\overline{e_{11}}~\overline{e_{ik}}~\overline{e_{11}}.$
\end{proof}

\begin{lem}\label{lem:generalequalities}
For any  $i, i'\in I$, $j,j'\in J$, if  $e_{1j}e_{i1}=e_{1j'}e_{i'1}$, then $\overline{e_{11}}~\overline{e_{ij}}~\overline{e_{11}}=\overline{e_{11}}~\overline{e_{i'j'}}~\overline{e_{11}}.$
\end{lem}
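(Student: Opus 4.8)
The plan is to reduce the ``diagonal'' equality $p_{ji}=p_{j'i'}$, which relates entries lying in different rows \emph{and} different columns of $P$, to a chain of the row-wise and column-wise equalities already handled in Lemma~\ref{lem:firstequalities}. The single tool that makes this reduction possible is Lemma~\ref{lem:thecolumns}, which grants complete freedom to realise a prescribed column of the sandwich matrix $P$, subject only to its first entry being $e_{11}$.

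First I would record that $p_{ji}=e_{1j}e_{i1}\in H_{1j}H_{i1}\subseteq H_{11}$, and likewise $p_{j'i'}\in H_{11}$, so these entries are legitimate targets for Lemma~\ref{lem:thecolumns}. Set $p:=p_{ji}=p_{j'i'}$. Using Lemma~\ref{lem:thecolumns} I would choose an index $m\in I$ whose column of $P$ agrees with column $i$ in row $j$ and with column $i'$ in row $j'$; that is, $p_{jm}=p_{ji}$ and $p_{j'm}=p_{j'i'}$, with the remaining entries of the column chosen arbitrarily (say all equal to $e_{11}$). This specification is consistent: the two requested entries lie in rows $j$ and $j'$, and should $j=j'$ the two requirements collapse to the single demand $p_{jm}=p=p_{ji}=p_{j'i'}$, while should either of $j,j'$ equal $1$ the corresponding demand is already forced, since the first entry of every column is $e_{11}$ and $p_{1i}=e_{11}$.

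The point of this choice is that $m$ now bridges $(i,j)$ and $(i',j')$ through equalities confined to a single row or a single column, so that I can chain three applications of Lemma~\ref{lem:firstequalities}:
\[
\overline{e_{11}}~\overline{e_{ij}}~\overline{e_{11}}
=\overline{e_{11}}~\overline{e_{mj}}~\overline{e_{11}}
=\overline{e_{11}}~\overline{e_{mj'}}~\overline{e_{11}}
=\overline{e_{11}}~\overline{e_{i'j'}}~\overline{e_{11}},
\]
where the first equality is Lemma~\ref{lem:firstequalities}(i) in row $j$ (valid since $p_{ji}=p_{jm}$), the second is Lemma~\ref{lem:firstequalities}(ii) in column $m$ (valid since $p_{jm}=p=p_{j'm}$), and the third is Lemma~\ref{lem:firstequalities}(i) in row $j'$ (valid since $p_{j'm}=p_{j'i'}$). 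No restriction such as $m\neq i,i'$ is needed, since whenever an index coincides the relevant instance of Lemma~\ref{lem:firstequalities} is simply an identity.

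I expect the only real content to be the construction of the bridging column $m$; once the existence of $m$ with $p_{jm}=p_{j'm}=p$ is secured, the remainder is a formal three-step rewrite. Accordingly the main obstacle is verifying that the prescription defining column $m$ is genuinely attainable via Lemma~\ref{lem:thecolumns} in all positional cases, in particular when $j=j'$ and when one of $j,j'$ equals $1$, which the consistency check above is designed to settle.
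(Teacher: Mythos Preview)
Your proposal is correct and follows essentially the same route as the paper: both construct a bridging column index (you call it $m$, the paper calls it $k$) via Lemma~\ref{lem:thecolumns} so that $p_{jm}=p_{ji}$, $p_{j'm}=p_{j'i'}$, and $p_{jm}=p_{j'm}$, and then apply Lemma~\ref{lem:firstequalities} three times in exactly the chain you wrote. The only cosmetic difference is that the paper fills the entire bridging column with $a=p_{ji}=p_{j'i'}$ (i.e.\ takes the $k$th column to be $(e_{11},a,\ldots,a)^T$), which makes the consistency check for the cases $j=j'$ or $j,j'\in\{1\}$ immediate, whereas you specify only rows $j$ and $j'$ and leave the rest arbitrary.
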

\begin{proof} Let $a=e_{1j}e_{i1}=e_{1j'}e_{i'1}$. By Lemma~\ref{lem:thecolumns}  we can choose a $k\in I$ such that the 
$k$th column of $P$ is $(e_{11}, a,\hdots,a)$. Then $p_{ji}=p_{jk}$ 
and $p_{j'k}=p_{j'i'}$ (this is true even if $j$ or $j'$ is $1$) and our hypothesis now gives that $p_{jk}=p_{j'k}$. The result now follows from three applications of Lemma
~\ref{lem:firstequalities}.
\end{proof}

From now on, we denote $\overline{e_{11}}~\overline{e_{ij}}~\overline{e_{11}}$ with $e_{1j}e_{i1}=a^{-1}$ by $w_{a}$. Notice that $w_{e_{11}}=\overline{e_{11}}$.
Of course, $a=a_g$ for some $g\in G$.

\begin{lem}\label{lem:hom}
With the notation given above, for any $u,v\in H$, we have $w_{u}w_{v}=w_{uv}$ and $w_{u}^{-1}=w_{u^{-1}}$.
\end{lem}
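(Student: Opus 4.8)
```latex
\textbf{Proof proposal for Lemma~\ref{lem:hom}.}

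The plan is to establish the two claimed identities by reducing products of the
generators $w_a$ to expressions in which Lemma~\ref{lem:generalequalities} can be
applied. The key observation is that each $w_a = \overline{e_{11}}~\overline{e_{ij}}~\overline{e_{11}}$
is indexed by a single element $a=a_g$ of $H_{11}$, namely $a=(e_{1j}e_{i1})^{-1}$,
and the content of Lemma~\ref{lem:generalequalities} is precisely that this indexing
is well defined: $w_a$ depends only on $a$, not on the particular choice of $i\in I$,
$j\in J$ with $e_{1j}e_{i1}=a^{-1}$. So the strategy is: given $u,v\in H$, write
$u=a_g$ and $v=a_h$ with the corresponding products of sandwich entries, compute
$w_u w_v$ as a word in the $\overline{e_{ij}}$, collapse the interior using the basic
relations and Lemma~\ref{lem:easy}, and finally identify the resulting element as a
$w$-symbol indexed by $uv$.

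First I would exploit the freedom in choosing representatives. Using
Lemma~\ref{lem:thecolumns} I can pick a single column $k\in I$ of $P$ realising any
prescribed tuple; concretely, for $u\in H$ I would choose $i\in I$ so that
$e_{1j}e_{i1}=u^{-1}$ for a convenient fixed $j$, and similarly arrange the
representative for $v$ so that the middle of the product $w_u w_v$ contains a
factor that can be simplified. The computation then runs through the same kind of
manipulation used in Lemma~\ref{lem:generators}: one inserts $\overline{e_{11}}$ factors,
uses Proposition~\ref{prop:remarkable}(ii) to stay inside $\overline{H}$, applies the
basic product relations $\overline{e_{ij}}~\overline{e_{kl}}=\overline{e_{il}}$ wherever the
$\mathcal{R}$/$\mathcal{L}$ incidences permit, and applies Lemma~\ref{lem:easy} to
pass from products of idempotents in $D$ to products of their bars. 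After cancellation
the word should reduce to $\overline{e_{11}}~\overline{e_{mn}}~\overline{e_{11}}$ for some
$m\in I$, $n\in J$, and a direct sandwich-matrix computation shows
$e_{1n}e_{m1}=(uv)^{-1}$, whence Lemma~\ref{lem:generalequalities} identifies the
reduced word as $w_{uv}$. For the inverse claim I would argue more cheaply: since
$w_u$ is, by construction, an element of the group $\overline{H}$, and
Lemma~\ref{lem:inverses} already exhibits $\overline{e_{1j}}~\overline{e_{i1}}$ as its
inverse, it suffices to check that $w_{u^{-1}}$ acts as a one-sided inverse of $w_u$,
which follows from the multiplicativity $w_u w_{u^{-1}}=w_{uu^{-1}}=w_{e_{11}}=\overline{e_{11}}$
once the first identity is in hand.

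The main obstacle I anticipate is the bookkeeping in the interior collapse: ensuring
that at every step the two adjacent bars $\overline{e_{ij}}$ and $\overline{e_{kl}}$ being
multiplied really are comparable under $\mathcal{R}$ or $\mathcal{L}$ so that a basic
product applies, and that the indices $i,j,k,l$ track correctly through the sandwich
matrix. This is exactly where the hypothesis $n\geq 3$ and the special structure of
$P$ (via Lemma~\ref{lem:thecolumns}) do the work, giving enough room to choose
representatives whose composite kernel/image data match up. The delicate point is to
verify the final sandwich-entry computation $e_{1n}e_{m1}=(uv)^{-1}$ rather than
$(vu)^{-1}$ or some conjugate, since $G$ need not be abelian; I would keep careful
track of the order of multiplication in $H_{11}\cong G$ throughout, relying on
Lemma~\ref{lem:Goccurs} for the identification. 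Once well-definedness of the indexing
and the reduction are secured, both identities are immediate.
```
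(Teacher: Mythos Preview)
Your plan is correct and is exactly the paper's approach: concretely, one uses Lemma~\ref{lem:thecolumns} to pick columns $i,l\in I$ with $(p_{2i},p_{3i})=(u^{-1},(uv)^{-1})$ and $(p_{2l},p_{3l})=(e_{11},v^{-1})$, so that $w_u=\overline{e_{11}}\,\overline{e_{i2}}\,\overline{e_{11}}$ and $w_v=\overline{e_{11}}\,\overline{e_{l3}}\,\overline{e_{11}}$, the central $\overline{e_{11}}$ rewrites as $\overline{e_{12}}\,\overline{e_{l1}}$ via Lemma~\ref{lem:easy}, and the resulting $\overline{e_{i2}}\,\overline{e_{l3}}$ collapses to $\overline{e_{i3}}$ by Lemma~\ref{lem:easy} again using the rectangular band coming from $p_{2i}^{-1}p_{2l}=u=p_{3i}^{-1}p_{3l}$ (Lemma~\ref{lem:sandwich}), which is precisely $w_{uv}$. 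The inverse statement then follows just as you say, from $w_{u^{-1}}w_{u}=w_{e_{11}}=\overline{e_{11}}$.
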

\begin{proof} By Lemma~\ref{lem:thecolumns}, $P$ must contain columns  $(e_{11}, u^{-1}, v^{-1}u^{-1},\cdots)^{T}$ and $(e_{11}, e_{11}, v^{-1},\cdots)^{T}$. For convenience, we suppose that they are the $i$-th and $l$-th columns, respectively. So,  $p_{2i}=e_{12}e_{i1}=u^{-1}$,
$p_{3i}=e_{13}e_{i1}=v^{-1}u^{-1}$, $p_{2l}=e_{12}e_{l1}=e_{11}$ and $p_{3l}=e_{13}e_{l1}=v^{-1}$. It is easy to see that $p_{2i}^{-1}p_{2l}=p_{3i}^{-1}p_{3l}$. Then  $\begin{bmatrix} e_{i2}&e_{i3}\\ e_{l2}&e_{l3}\end{bmatrix}$ is a rectangular band by Lemma~\ref{lem:sandwich}.  
In the notation given above, we have $w_{u}=\overline{e_{11}}~\overline{e_{i2}}~\overline{e_{11}}$, $w_{v}=\overline{e_{11}}~\overline{e_{l3}}~\overline{e_{11}}$ and  $w_{uv}=\overline{e_{11}}~\overline{e_{i3}}~\overline{e_{11}}$. By Lemma~\ref{lem:easy}, $\overline{e_{12}}~\overline{e_{l1}}=\overline{e_{11}}$. We then calculate
$$
\begin{array}{rcl}
w_{u}w_{v}&=&\overline{e_{11}}~\overline{e_{i2}}~\overline{e_{11}}~\overline{e_{11}}~\overline{e_{l3}}~\overline{e_{11}}\\
                                              & =&
\overline{e_{11}}~\overline{e_{i2}}~\overline{e_{12}}~\overline{e_{l1}}~\overline{e_{l3}}~\overline{e_{11}} \\
                                              & =&
\overline{e_{11}}~\overline{e_{i2}}~\overline{e_{l3}}~\overline{e_{11}} \\
                                              &=&
\overline{e_{11}}~\overline{e_{i3}}~\overline{e_{11}} \ \ \ \ (\mbox{since } \begin{bmatrix} e_{i2}&e_{i3}\\ e_{l2}&e_{l3}\end{bmatrix} \mbox{is a rectangular band} )\\
&=&
w_{uv}.
\end{array}
$$

Finally, we show $w_{u}^{-1}=w_{u^{-1}}$. This follows since $\overline{e_{11}}=w_{{e_{11}}}=w_{u^{-1}u}=w_{u^{-1}}w_{u}$. \end{proof}

It follows from Lemma~\ref{lem:generators} that any element of 
$\overline{H}$ can be expressed as some $w_{a}$ for some  $a\in H_{11}$.

\begin{thm}\label{thm:theresult}
Let $F_n(G)=\bigcup_{i=1}^n Gx_{i}$ be a finite rank $n$ free (left) group act with 
$n\geq 3$, and let $\en F_n(G)$ the endomorphism monoid of $F_n(G)$. Let $e$ be an arbitrary rank 1 idempotent. Then the maximal subgroup $\overline{H}$ of $\ig(E)$ containing $\overline{e}$ is isomorphic to $G$. In other words, every group arises as the maximal subgroup of the free idempotent generated semigroup arising from the endomorphism monoid of a finite dimensional free group act.
\end{thm}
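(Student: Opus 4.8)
The plan is to promote the correspondence $a\mapsto w_a$ into an explicit isomorphism from $H=H_{e_{11}}$ onto $\overline{H}$; since $H\cong G$ by Lemma~\ref{lem:Goccurs}, this delivers the theorem at once. Two of the three ingredients are already assembled. Lemma~\ref{lem:hom} shows that $a\mapsto w_a$ is a homomorphism, and the remark following it records that this map is onto, every element of $\overline{H}$ being of the form $w_a$. So the only thing left to supply is injectivity, and the whole final argument reduces to verifying that single property.

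For injectivity I would exploit the natural morphism $\phi\colon\ig(E)\to\en F_n(G)$, whose restriction to $\overline{H}$ is a group morphism onto $H$. The key claim is that $\phi$ is a left inverse of $a\mapsto w_a$, i.e. $w_a\phi=a$ for every $a\in H$. Indeed, writing $w_a=\overline{e_{11}}\,\overline{e_{ij}}\,\overline{e_{11}}$ with $i,j$ chosen so that $e_{1j}e_{i1}=a^{-1}$, applying $\phi$ gives $w_a\phi=e_{11}e_{ij}e_{11}$. Passing to the Rees matrix coordinates of Lemma~\ref{lem:sandwich}, where $e_{ij}=(i,p_{ji}^{-1},j)$ and $p_{1i}=p_{j1}=p_{11}=e_{11}$, a one-line computation yields $e_{11}e_{ij}e_{11}=(1,p_{ji}^{-1},1)$, while $e_{1j}e_{i1}=p_{ji}$; since the latter equals $a^{-1}$ we get $p_{ji}^{-1}=a$ and hence $w_a\phi=a$. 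The only care needed here is to keep the inverses straight in the passage from $e_{1j}e_{i1}=a^{-1}$ to $w_a\phi=a$, identifying $H_{11}$ with $G$ as in Lemma~\ref{lem:Goccurs}.

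Granting $w_a\phi=a$, the map $a\mapsto w_a$ possesses a left inverse and is therefore injective; being a surjective, injective homomorphism it is an isomorphism $H\to\overline{H}$, so $\overline{H}\cong H\cong G$. I expect no real obstacle at this final stage: the substantive work has already been discharged in the earlier lemmas, above all Lemma~\ref{lem:singsquaresD} together with Corollary~\ref{cor:updown} (which convert the combinatorics of singular squares into the clean statement that $E$-squares in $D$ are exactly the rectangular bands), feeding the multiplicativity of $a\mapsto w_a$ in Lemma~\ref{lem:hom}, and the generation result Lemma~\ref{lem:generators}. What remains is purely the coordinate bookkeeping above, so the concluding step is genuinely short.
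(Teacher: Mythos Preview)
Your proof is correct and is essentially the paper's argument run in the opposite direction: the paper defines $\phi:\overline{H}\to H$ by $w_a\mapsto a$ (this is precisely the restriction of the natural morphism, as your Rees-matrix computation confirms), checks it is onto via Lemma~\ref{lem:thecolumns}, a morphism via Lemma~\ref{lem:hom}, and has trivial kernel since $w_{e_{11}}=\overline{e_{11}}$. Your formulation, building $a\mapsto w_a$ and exhibiting the natural morphism as a left inverse, makes the well-definedness/injectivity step more explicit but is otherwise the same.
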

\begin{proof}
Define a mapping $\phi: \overline{H}\longrightarrow H$ by $w_{a}\mapsto a$. By Lemma~\ref{lem:thecolumns}, $\phi$ is onto and it is a morphism by Lemma~\ref{lem:hom}. To prove that $\phi$ is one-one, suppose $w_{a}\phi=e_{11}$. Then by definition we have that  $a=e_{11}$ and we have observed that $w_{e_{11}}=\overline{e_{11}}$. Thus $\overline{H}$ is isomorphic to $H$ and we have observed in Lemma~\ref{lem:Goccurs} that $H$ is isomorphic to $G$. Indeed, the map $w_{a_g}\mapsto g$ is an isomorphism.
\end{proof}

We remark that if $G$ is finite, then clearly so is  $E(\en F_n(G))$. However, in
\cite{gray:2012} it is proven that if $G$ is {\em finitely presented}, then
$G$ is a maximal subgroup of $\ig(E)$ for some {\em finite} $E$: our construction makes no headway in this direction.


\begin{thebibliography}{99}

\bibitem{brittenham:2009} M. Brittenham, S.W. Margolis, J. Meakin, `Subgroups of free idempotent
generated semigroups need not be free', {\em J. Algebra} {\bf 321} (2009),
3026--3042.

\bibitem{brittenham:2011}  M. Brittenham, S.W. Margolis, J. Meakin, 
`Subgroups of free idempotent generated semigroups: full linear monoid', {\em arXiv: 1009.5683}.

\bibitem{dolinka:2012} I. Dolinka and R. Gray,  `Maximal subgroups of free idempotent generated semigroups over the full linear monoid', {\em Trans. American Math. Soc.}, to appear.

\bibitem{easdown:1985}  D. Easdown, `Biordered sets come from semigroups', 
{\em J. Algebra} {\bf 96}
(1985), 581–-591.



\bibitem{erdos:1967} J.A. Erd\"{o}s, `On products of idempotent
matrices', {\em Glasgow Math. J.} {\bf 8} (1967),
118--122.

\bibitem{fitzgerald:1972} D.G. Fitz-Gerald, `On inverses of products of idempotents in regular
semigroups', {\em J. Australian Math. Soc.} {\bf 13} (1972), 335--337.

\bibitem{hall:1973} T.E. Hall, `On regular semigroups', {\em J. Algebra} {\bf 24} (1973),  l--24.

\bibitem{basisiii} J. Fountain and V. Gould, `Products of
idempotent endomorphisms
of relatively free
algebras with weak exchange properties', {\em Proc. Edinburgh Math.
Soc.} {\bf 50} (2007), 343--362.

\bibitem{fountainandlewin} J. Fountain and A. Lewin,
`Products of idempotent endomorphisms of
an independence algebra of finite rank', {\em
Proc. Edinburgh Math. Soc.} {\bf 35} (1992), 493--500. 

\bibitem{gould:1995} V. Gould, `Independence algebras', \textit{Algebra
Universalis} \textbf{33} (1995), 294--318.

\bibitem{gray:2012} R. Gray and N. Ruskuc, `On maximal subgroups of free idempotent generated semigroups', {\em Israel J. Math.} {\bf 189} (2012), 147--176.

\bibitem{gray:2012a} R. Gray and N. Ruskuc, `Maximal subgroups of free idempotent-generated semigroups over the full transformation monoid', {\em Proc. London Math. Soc.} to appear.


\bibitem{howie:1966} J.M. Howie, `The subsemigroup generated by the
idempotents of a full transformation semigroup'. {\em J. London
Math. Soc.} {\bf 41} (1966), 707--716.

\bibitem{howie:1995} J. M. Howie, \textit{Fundamentals of Semigroup Theory},
Oxford University Press, Oxford, 1995.

\bibitem{mcelwee:2002}  B. McElwee, `Subgroups of the free semigroup on a biordered set in which principal ideals are singletons', {\em Comm. in Algebra}
{\bf 30} (2002),  5513--5519.

\bibitem{nambooripad:1979} K.S.S. Nambooripad, `Structure of regular semigroups. I', {\em Memoirs
American Math. Soc.} {\bf  224} (1979).

\bibitem{putcha:2006}  M. Putcha, `Products of idempotents in algebraic monoids', 
{\em J. Australian Math. Soc.} {\bf 80} (2006), 193--203.

\bibitem{ruskuc:1999} N. Ruskuc, `Presentations for Subgroups of Monoids', {\em J. Algebra} {\bf  220} (1999),  365--380.  
\end{thebibliography}
\end{document}